\newtheorem{theorem}{Theorem}[]
\newtheorem{thmx}{Theorem}
\newtheorem{corollary}[]{Corollary}
\newtheorem{lemma}[]{Lemma}
\newtheorem{proposition}[]{Proposition}
\newtheorem{definition}[]{Definition}
\newtheorem{remark}[]{Remark}
\def\N{\mathbb{N}}
\def\Z{\mathbb{Z}}
\def\R{\mathbb{R}}
\def\T{\mathbb{T}}
\def\B{\mathcal{B}}
\def\F{\mathcal{F}}
\def\M{\mathcal{M}}
\def\P{\mathcal{P}}
\def\Q{\mathcal{Q}}
\def\tH{\tilde{H}}
\def\tx{\tilde{x}}
\def\tf{\tilde{f}}
\def\diam{\operatorname{diam}}
\def\id{\operatorname{id}}
\def\supp{\operatorname{supp}}
\def\quand{\quad\text{and}\quad}
\begin{document}


\title{EQUILIBRIUM STATES FOR MAPS ISOTOPIC TO ANOSOV}
\author{Carlos F. Álvarez}
\address{Instituto de Matemática, Estatística e Computação Científica, IMECC-UNICAMP, Campinas-SP, Brazil}
\email{fabmath92@gmail.com}
\date{2020}

\author{Adriana Sánchez}
\address{Centro de investigación de Matemática Pura y Aplicada, Universidad de Costa Rica. San José, Costa Rica.}
\email{adriana.sanchez\_c@ucr.ac.cr}

\author{Régis Varão}
\address{Instituto de Matemática, Estatística e Computação Científica, IMECC-UNICAMP, Campinas-SP, Brazil}
\email{varao@unicamp.br}

\keywords{Equilibrium States, Partially Hyperbolic Diffeomorphism, Derived from Anosov, Conditional Measures}
\subjclass[2010]{Primary: 37D35; Secondary: 28A50, 37D30}


\begin{abstract}
In this work we address the problem of existence and uniqueness (finiteness) of ergodic equilibrium states for partially hyperbolic diffeomorphisms isotopic to Anosov on $\T^4$, with 2-dimensional center foliation. To do so  we propose to study the disintegration of measures along 1-dimensional subfoliations of the center bundle. Moreover, we obtain a more general result characterizing the disintegration of ergodic measures in our context.
\end{abstract}


\maketitle
\tableofcontents

\section{Introduction}
In dynamics we are interested in the orbits under some law of motion, but it is usually a better way to have a global view of the dynamics to understand it from a probabilistic point of view. That is, instead of being concerned with the orbit of any point we want to say important information about most of the orbits, or at least for those which are (for any reason) particularly important. Hence, it comes to play ergodic theory, which deals with dynamical systems from a probabilistic point of view.\\

From the ergodic theory point of view, invariant measures have play an important role in the theory. In the 1970s, Sinai, Ruelle and Bowen (\cite{Barreira} and references therein) started to study equilibrium states in dynamical systems inspired by techniques and results from statistical mechanics.\\

Given a diffeomorphism $f$ over a compact manifold $M$, an \textit{equilibrium state} for a continuous \textit{potential} $\phi:M\to \mathbb{R}$ is an $f$-invariant Borel probability measure $\mu$ that maximizes the quantity $h_{\mu}(f)$+$\int\phi d\mu$ among all $f$-invariant measures. When $\phi\equiv 0,$ the measure $\mu$ is called \textit{maximal entropy measure}. An interesting problem of ergodic theory is to determinate existence and uniqueness of equilibrium states with respect to some class of potentials. In the setting of uniformly hyperbolic diffeomorphisms, Bowen \cite{Bowen1} solved this problem for Hölder continuous potentials, and in other contexts there exists many approach for this problem. \\

Since the pioneering works of Sinai, Ruelle and Bowen on thermodynamical formalism many results have been obtained, in particular in the context of non-uniformly hyperbolic maps and partially hyperbolic diffeomorphisms. Climenhaga and Thompson extended Bowen's techniques for a nonuniform setting \cite{ClimenhagaFisherThompson,ClimenhagaFisherThompson1}, using those general techniques, Climenhaga \emph{et. al.} \cite{ClimenhagaFisherThompson1, ClimenhagaFisherThompson} proved that robustly transitive diffeomorphisms introduced by Mañé and Bonatti-Viana have a unique equilibrium state for natural classes of potentials. For potentials with a small variational condition, Rios and Siqueira \cite{RiosSiqueira} obtained uniqueness of equilibrium states for partially hyperbolic horseshoes. For the same class of potentials that we will study in this work, Carvalho and Pérez \cite{CarvalhoPerez} obtained similar results about equilibrium states for a class of skew product. Recently, Climenhaga \emph{et. al.} \cite{ClimenhagaPesinZelerowicz} studied uniqueness of equilibrium states for certain transitive partially hyperbolic diffeomorphisms.\\ 

In this work we are also concerned with thermodynamical formalism in the less explored context of a partially hyperbolic diffeomorphism with higher dimensional center foliaiton (i.e. two dimensional or higher) which are isotopic to Anosov. Using disintegration techniques of measures along the center foliation, under some extra assumptions, we obtain similar results to those Crisostomo and Tahzibi \cite{CrisostomoTahzibi}. We give conditions on $f$ and its center foliation under which one can control the geometry of the preimage of the semiconjugacy. Roughly speaking, our conditions are
\begin{itemize}
\item that $f$ is dynamically coherent and the center foliation splits into line-bundles, and
\item  that the preimage of a point $x$ under the semiconjugacy is contained in a unique center leaf and has a ``controlled geometry'', namely a rectangle-like structure.
\end{itemize}
In particular these assumptions are satisfied by the maps considered by Buzzi \emph{et. al.} \cite{BuzziFisherSambarinoVasquez} and Carrasco \emph{et. al.} \cite{CarrascoLizanaPujalsVasquez}, such maps are isotopic to a linear Anosov. \\

Under these hypotheses we address the problem of existence and uniqueness 
(or finiteness) of the equilibrium states. On what follows the set $C$ is the set where the semiconjugacy of $f$ with its linearization is not injective. We now present our results:

\begin{thmx}\label{th.Equilibrium}
Let $f:\T^4\to \T^4$ be a partially hyperbolic diffeomorphism isotopic to Anosov. Under the above assumptions if the measure of $C$ is zero we have uniqueness of the equilibrium state. Otherwise, the disintegration of the measure on the center foliation is atomic and, under some additional assumptions, the equilibrium state is ``virtually hyperbolic'' and not unique.
\end{thmx}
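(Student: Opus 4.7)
The overall approach is to exploit the semiconjugacy $h \colon \T^4 \to \T^4$ between $f$ and its linear Anosov linearization $A$, which satisfies $h \circ f = A \circ h$. Since $A$ is uniformly hyperbolic, Bowen's theorem supplies a unique equilibrium state $\nu_A$ for H\"older potentials. For any equilibrium state $\mu$ of $(f,\phi)$, the pushforward $h_*\mu$ is $A$-invariant, and a variational argument exploiting the semiconjugacy (entropy cannot increase under a factor map by more than a controllable amount, and the potential descends through $h$ for the natural class of potentials at hand) forces $h_*\mu = \nu_A$. Thus every equilibrium state of $f$ projects to the same measure on the linearization.

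If $\mu(C)=0$, then $h$ is injective on a $\mu$-full-measure set, so $\mu$ is uniquely recovered from $h_*\mu = \nu_A$ by pulling back mass through singleton fibers. Hence any two equilibrium states coincide, giving uniqueness.

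If $\mu(C)>0$, I would invoke the companion disintegration result of this paper, which characterizes ergodic invariant measures via a dichotomy on the conditionals $\{\mu^c_x\}$ along the two-dimensional center foliation: along each one-dimensional subfoliation of the center bundle the conditionals are either equivalent to Lebesgue or purely atomic. The hypothesis that $h^{-1}(x)$ lies in a single center leaf with rectangle-like geometry excludes the Lebesgue alternative, since an absolutely continuous disintegration would spread mass through a thin rectangle in a way incompatible with the positive measure concentrated on $C$. The atomic case remains. Virtual hyperbolicity then follows because the conditional dynamics along the center reduces to a permutation of finitely many atoms, so $(f,\mu)$ is measurably equivalent to a hyperbolic factor. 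Non-uniqueness is produced by selecting different equivariant atoms inside the rectangular fibers---for instance, distinct ``corner'' choices of $h^{-1}(x)$---each yielding a distinct ergodic $f$-invariant measure that realizes the same free energy by the same variational computation.

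The central obstacle is the dichotomy argument for the case $\mu(C)>0$: ruling out an absolutely continuous or a singular continuous alternative and establishing the atomic conclusion requires the full strength of the rectangle geometry together with a Hopf-type propagation of absolute continuity along the two invariant one-dimensional subfoliations of the center. This is where the coherence, line-bundle splitting, and rectangle hypotheses are decisively used, and it forms the main technical heart of the theorem.
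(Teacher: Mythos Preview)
Your treatment of the case $\mu(C)=0$ matches the paper: both push equilibrium states forward through the semiconjugacy to the unique equilibrium state of $(A,\phi)$, and recover uniqueness from the fact that $H$ is injective off $C$.

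The case $\mu(C)=1$ contains real gaps. First, you mischaracterize the companion result: there is no Lebesgue/atomic dichotomy in the paper, and the atomicity of the conditional measures is \emph{not} obtained by ruling out an absolutely continuous alternative via any Hopf-type propagation. The actual mechanism is an \emph{invariance principle} argument: assuming a non-atomic conditional on an element $Q(x)=\mathcal{P}(x)\cap\mathcal{F}^1(x)$, one parameterizes the interval by cumulative conditional mass (a Carath\'eodory map), builds for each $t\in(0,1)$ the $f$-invariant sets $H_t=\bigcup_n f^n(\{\text{mass below}\le t\})$ and the analogous $G_t$, and shows both have positive measure but essentially disjoint---contradicting ergodicity. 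Orientation preservation of $\mathcal{F}^1$ is what makes these sets genuinely invariant. A second pass through the quotient by $\mathcal{Q}$ handles the $\mathcal{F}^2$ direction. Your proposed route (rectangle geometry forbidding Lebesgue spread) does not do this job.

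Second, your account of virtual hyperbolicity is too loose. It does \emph{not} follow simply from ``permutation of finitely many atoms''; the paper needs the extra hypotheses that $H$ sends center leaves to center leaves and that either the center of $A$ is monotone (expanding or contracting) or $H(\mathcal{F}^i)$ are invariant foliations of $A$. These are used, via a Markov partition for $A$ and the full support of $\nu=H_*\mu$, to bound the number of atoms per global center leaf; orientation preservation then forces a single atom. Finally, for non-uniqueness your ``corner choice'' idea is in the right spirit, but you omit the nontrivial point: one must show the set of extremal points of the collapse intervals is \emph{measurable}, which the paper does through the Measurable Choice Theorem and a limiting construction along the induced linear flow on $\mathcal{F}^1_A$. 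Without this, the competing measure $\eta=\int \delta_{b(\hat{x})}\,d\hat\mu$ is not well defined.
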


By virtually hyperbolicity we mean that there exists a full measurable invariant subset which intersects each center leaf in at most one point. This result would actually follow from a more general one regarding ergodic measures:

\begin{thmx}\label{th.Ergodic}
Let $f:\T^4\to \T^4$ be a partially hyperbolic diffeomorphism isotopic to Anosov. Under the above assumptions if the measure of $C$ is zero the system is almost conjugated to an Anosov. Otherwise, the disintegration of the measure is atomic and, under some additional assumptions, the measure is ``virtually hyperbolic''.
\end{thmx}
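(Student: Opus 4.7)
The plan is to use the $f$-invariance of the set $C$ (which follows from the semiconjugacy relation $h \circ f = A \circ h$, where $A$ is the Anosov linearization of $f$) together with ergodicity of the measure $\mu$ to obtain the dichotomy $\mu(C) \in \{0,1\}$. Each case is then handled separately, with the real work concentrated in the positive-measure case.

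In the case $\mu(C)=0$, the restriction of $h$ to the full-measure $f$-invariant set $\T^4 \setminus C$ is injective; this yields a measurable bijection conjugating $f|_{\T^4\setminus C}$ to $A$ on its image, which is precisely the desired almost conjugacy to an Anosov.

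In the case $\mu(C)=1$, the hypothesis gives, for $\mu$-a.e.\ $x$, that the fiber $h^{-1}(h(x))$ is contained in a single center leaf and has the postulated rectangle-like structure. I would first disintegrate $\mu$ along the center foliation via Rokhlin's theorem, and then, exploiting the assumed splitting of the center bundle into two line subbundles, further disintegrate each center conditional along the two $1$-dimensional subfoliations tangent to these line subbundles. The crux is to show that the conditionals along each $1$-dimensional subfoliation are purely atomic; this should follow by adapting the argument of Crisostomo--Tahzibi \cite{CrisostomoTahzibi} to each $1$-dimensional factor, using that the linearization $A$ acts uniformly on each direction, so that an absolutely continuous component of the conditional would be incompatible with $h$ collapsing entire rectangles to points. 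Combining atomicity of the two $1$D conditionals yields atomicity of the full center conditional. From there one obtains the virtually hyperbolic invariant set by selecting, on each center leaf, the atom(s) of maximal weight: this selection is measurable and $f$-invariant, has full $\mu$-measure by ergodicity, and under the additional assumptions on the rectangle geometry is forced to consist of exactly one point per leaf.

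The main obstacle I anticipate is establishing atomicity of the $1$D conditionals: the rectangle-like geometry of $h^{-1}(h(x))$ inside the $2$-dimensional center leaf requires a finer analysis than the $1$-dimensional center case treated in \cite{CrisostomoTahzibi}, in particular to control how rectangles can be permuted by $f$ along each subfoliation and to rule out a singular-continuous component of the conditional. This is precisely where the ``additional assumptions'' in the statement should come into play, and also where the transition from ``atomic with possibly several atoms per leaf'' to ``at most one point per leaf'' must be justified.
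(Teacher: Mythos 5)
Your overall architecture (invariance of $C$ plus ergodicity gives the dichotomy; injectivity of $H$ off $C$ gives the almost conjugacy; in the case $\mu(C)=1$ reduce to one-dimensional conditionals along the two subfoliations and then select a distinguished atom per leaf) matches the paper's, but the two steps where you locate the real work are not carried out by the mechanism you propose, and as stated they have genuine gaps. First, your atomicity argument is not a proof: there is no a priori incompatibility between $H$ collapsing a whole segment of $\F^1$ to a point and the conditional measure on that segment being Lebesgue-like, and the Crisostomo--Tahzibi comparison with the uniform action of $A$ is not what rules this out (in the paper that comparison is only used later, for finiteness of atoms per \emph{global} center leaf in Corollary \ref{cor.main}). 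The paper's Lemma \ref{lem.AtomicDis} is a self-contained argument using only ergodicity of $\mu$, the rectangle structure and the hypothesis that $f$ preserves the orientation of $\F^1$: one parametrizes each collapsed interval $Q(x)=\P(x)\cap\F^1(x)$ by the distribution function of $\mu_x$, builds the invariant sets $H_t$ and $G_t$, and shows that a non-atomic conditional would force two invariant sets of positive measure whose intersection is impossible by order preservation. Related to this, you cannot simply ``disintegrate $\mu$ along the center foliation via Rokhlin'': that partition is in general non-measurable; the paper works with the measurable partition $\P$ by fibers $H^{-1}H(x)$ (measurable because $H$ is continuous), intersects with $\F^1$ to get compact segments, and then passes to the quotient $\widehat{C}=C/\Q$ with the induced map to handle the $\F^2$ direction -- this quotient step is what makes ``combining the two $1$D disintegrations'' legitimate.

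Second, your selection of ``the atom(s) of maximal weight'' on each leaf cannot isolate a single point: the paper shows (Lemma \ref{lem.OneAtomQ}) that by invariance of $\delta\mapsto\mu\bigl(H_\delta^+\bigr)$ and ergodicity all atoms carry the \emph{same} weight $\delta_0$, so maximal weight distinguishes nothing. Uniqueness of the atom per $\Q$-element, per rectangle, and eventually per global center leaf is obtained instead from the order structure: since $f$ preserves the orientation of $\F^1$ (and the induced transversal orientation), the ``first'' and ``second'' atoms would generate two disjoint invariant sets of positive measure, contradicting ergodicity, and for the global statement one selects the extremal atom. Finally, note that the ``additional assumptions'' (that $H$ sends center leaves to center leaves, together with expansiveness of the center of $A$ or invariance of $H(\F^i)$) enter only at this last stage, to pass from finitely many atoms per fiber to virtual hyperbolicity; they play no role in the atomicity of the conditionals, contrary to where your sketch places them.
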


A precise statement on the assumptions and the main results are given in Section \ref{sec.Main} int Theorem \ref{main:equilibrium} and Theorem \ref{main:ergodic} respectively. The novelty with respect to previous works is the study for partially hyperbolic diffeomorphims with 2-dimensional (or higher) center foliations. This is done by a careful study on the disintegration of the measures along the line-bundles of the center foliation. \\

\begin{remark}
We remark that the results of theorems \ref{th.Equilibrium} and \ref{th.Ergodic}  are valid for partially hyperbolic diffeomorphisms $f : \T^n \to \T^n$ isotopic to Anosov with $k$-dimensional center bundle with $1\leq k <n$. Provided that they satisfy the assumptions above. The proof for the higher dimensional case follows in a similar way as in the 2-dimensional case. A further discussion about it will be presented in Section \ref{sec.ProofMain}.
\end{remark}

The remainder of the article is organized as follows. In the next Section we
discuss some necessary preliminaries in equilibrium states, partially hyperbolic dynamics and disintegration of the measures. In Section \ref{sec.Main} we give precise statements os our main results, while their proofs are presented in sections \ref{sec.ProofMain}, \ref{sec.ProofCor} and \ref{sec.ProofEq}.

\section{Preliminaries}

\subsection{Entropy and equilibrium states}
Let $(M,d)$ be a compact metric space and $f:M\to M$ a continuous map. For $\delta\in (0,1)$, $n\in \mathbb{N}$ and $\epsilon>0$, a finite set $E\subset M$ is called an $(n,\epsilon,\delta)$-\textit{covering} if the union of the all $\epsilon$-balls, $B_{n}(x,\epsilon)=\{y\in M: d(f^{i}(x),f^{i}(y))<\epsilon\}$, centered at points $x\in E$ has $\mu$-measure greater than $\delta$. The \textit{metric entropy} is defined by $$h_{\mu}(f)=\displaystyle\lim_{\epsilon \rightarrow 0}\displaystyle\limsup_{n\rightarrow \infty}\frac{1}{n}\log \min\{\# E: E\subseteq M \ {\rm is  \ a \ }(n,\epsilon,\delta)\mbox{\rm{-covering}\ set} \}.$$
A set $E\subseteq M$ is said to be $(n,\epsilon)$-\textit{separated}, if for every $x, y \in E, x \neq y$, there exists $i\in\{0,\ldots, n-1\}$ such that $d(f^{i}x, f^{i}y)\geq \epsilon$. The \textit{topological entropy} on a non-empty compact set $K\subset M$ is defined by $$h(f,K)=\displaystyle\lim_{\epsilon \rightarrow 0}\displaystyle\limsup_{n\rightarrow \infty}\frac{1}{n}\log \sup\{\# E: E\subseteq K \ \mbox{\rm{is}} \ (n,\epsilon)\mbox{\rm{-separated}} \}.$$ We denote $h_{top}(f):=h(f,M).$

\begin{definition}
Let $f:M\rightarrow M$ be a continuous map over a compact manifold $M$. An $f$-invariant Borel probability measure $\mu$ is an \textbf{equilibrium state} for $f$ with respect to a potential $\phi\in C^{0}(M,\mathbb{R})$ if it satisfies $$h_{\mu}(f)+\displaystyle\int \phi d\mu= \sup\{h_{\nu}(f)+\displaystyle\int \phi d\nu:\nu\in \mathcal{M}(f)\},$$ where $h_{\mu}(f)$ is the metric entropy of $f$  with respect to $\mu$. If $\phi\equiv 0$, $\mu$ is called \textbf{measure of maximal entropy}.
\end{definition}

\subsection{Partially hyperbolicity}
Let $f:M\to M$ be a diffeomorphism defined on a compact manifold $M$, $f$ is said to be \textit{partially hyperbolic} if:
\begin{enumerate}
 \item There exists a non-trivial splitting of the tangent bundle $TM=E^s\oplus E^c\oplus E^u$ invariant under the derivative $Df$;
 \item There exist a Riemannian metric $\|\cdot\|$ on $M$, such that we have positive continuous functions $\nu$, $\hat{\nu}$, $\gamma$, $\hat{\gamma}$ with $\nu$, $\hat{\nu}<1$ and $\nu<\gamma<{\hat\gamma}^{-1}<{\hat{\nu}}^{-1}$ such that, for any unit vector $v\in T_xM$,
\begin{alignat*}{2}
& \|Df(x)v \| < \nu(x) & \quad & \text{if } v\in E^s(x),
 \\
\gamma(x) < & \|Df(x)v \|  < {\hat{\gamma}(x)}^{-1} & & \text{if } v\in E^c(x),
 \\
{\hat{\nu}(x)}^{-1} < & \|Df(x)v\| & &  \text{if } v\in E^u(x).
\end{alignat*}
\end{enumerate}
The bundles $E^s$, $E^u$, $E^c$ are called the stable, unstable and center bundle respectively. It is well-known that the stable and unstable bundles integrate to $f$-invariant foliations $\F^s$ and $\F^u$ \cite{HirschPughShub}. The leaf of $\F^{\sigma}$ containing $x$ will be called $W^{\sigma}(x)$, for $\sigma= s,u$. Such foliations are $f$-invariant, that means $f$ sends leaves to leaves. 

\begin{remark}
Not always the central bundle $E^{c}$ may be tangent to an invariant foliation, but whenever such a foliation exists, it is denoted by $\mathcal{F}^{c}.$
\end{remark}

\begin{definition}
A partially hyperbolic diffeomorphism $f:M\rightarrow M$ is called dynamically coherent if there exist invariant foliations $\mathcal{F}^{c \sigma}$ tangent to $E^{c \sigma}=E^{c}\oplus E^{\sigma}$ for $\sigma=s,u.$
\end{definition}

\begin{remark}
In this article we assume that $f$ preserves the orientation on the $\F^i$ leaves.
\end{remark}

\subsection{Derived from Anosov}
\begin{definition}
A $C^1$-diffeomorphism $f:\T^d\to \T^d$ is called Derived from Anosov (DA) if it is isotopic to its action in the homology $A: H_1(\T^d )\to H_1(\T^d )$. We call $A$ the linear part of $f$.
\end{definition}

By a well-known result of Franks \cite{Franks} there exist a semiconjugacy $H:\T^d \to \T^d$ between $f$ and $A$, that is, $H\circ f=A\circ H.$ Moreover, its lift $\tH$ to $\R^d$ semiconjugates $\tf$ to $\tilde{A}$, and for some constant $K$ we have
\[
 \|\tH-\id\|_{C^0}\leq K.
\]
\vspace{0.1cm}

In particular, $\tH$ is proper. The constant $K$ depends continuously on $f$, and tends to zero as $f$ tends to $A$ in the $C^1$ norm. For every $\tx\in\R^d$, each $\tH^{-1}(\tx)$ is a compact set whose diameter is uniformly bounded from above $\diam (\tH^{-1}(\tx))\leq 2E$.\\

\begin{remark}
Take $\mu$ any $f$-invariant measure and let $\nu=H_{\ast}\mu$. It is well-known that $h_{\mu}(f)\geq h_{\nu}(A)$. Furthermore, the Ledrappier-Walters Variational principle \cite{LedrappierWalters} says that
\begin{equation}\label{eqn:LedWalt}
 \sup_{\mu:H_{\ast}\mu=\nu}h_{\mu}(f)=h_{\nu}(A)+\int_{\T^d}h(f,H^{-1}(x))d\nu(x).
\end{equation}
Hence, when $h(f,H^{-1}(x))=0$ for every $x\in\T^d$, we have that for any $f$-invariant measure $\mu$
\begin{equation}\label{eqn:EqualityMetricEntropy}
 h_{\mu}(f)= h_{\nu}(A).
\end{equation}
\end{remark}

Consider a potential $\phi:\T^d\to\R$ for $A$ and define $\varphi=\phi\circ H$. A well-known result due to Bowen \cite{Bowen} states that if $\mu$ is an equilibrium state of $(f,\varphi)$ then, $\nu=H_{\ast}\mu$ is an equilibrium state of $(A,\phi)$. Furthermore, the reciprocate is also true under certain conditions:\\

\begin{lemma}\label{lemma:ExistenceMu}
Let $f:\mathbb{T}^d \to \mathbb{T}^d$ be a DA partially hyperbolic diffeomorphism. Let $\phi:\T^d\to\R$ be a continuous potential and define $\varphi=\phi\circ H$ . Assume that $h(f,H^{-1}(x))=0$ for every $x\in \mathbb{T}^{d}$. If $\nu$ is an equilibrium state for $(A,\phi)$, then every $\mu\in \mathcal{M}(f)$ such that $H_{\ast}\mu=\nu$ is an equilibrium state for $(f,\varphi=\phi\circ H)$.
\end{lemma}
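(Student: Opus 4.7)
My plan is to combine the change-of-variables identity $\int \varphi\, d\mu = \int \phi\, d\nu$ with the entropy equality \eqref{eqn:EqualityMetricEntropy} guaranteed by the hypothesis $h(f, H^{-1}(x))=0$. First I would fix $\mu \in \M(f)$ with $H_*\mu = \nu$. Since $\varphi = \phi \circ H$, the pushforward identity gives $\int \varphi\, d\mu = \int \phi\, d\nu$, and \eqref{eqn:EqualityMetricEntropy} yields $h_\mu(f) = h_\nu(A)$. Adding, I obtain
\[
h_\mu(f) + \int \varphi\, d\mu = h_\nu(A) + \int \phi\, d\nu.
\]

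Next I would compare against an arbitrary competitor. For any $\mu' \in \M(f)$, set $\nu' := H_*\mu'$; the semiconjugacy $H \circ f = A \circ H$ forces $A_* \nu' = H_* f_* \mu' = H_* \mu' = \nu'$, so $\nu'$ is an $A$-invariant Borel probability measure. The same two identities applied to $\mu'$ give
\[
h_{\mu'}(f) + \int \varphi\, d\mu' = h_{\nu'}(A) + \int \phi\, d\nu'.
\]
Since $\nu$ is an equilibrium state for $(A, \phi)$, the right-hand side is bounded above by $h_\nu(A) + \int \phi\, d\nu$, and combining the two displays shows that $\mu$ attains the supremum of $h_{\mu'}(f) + \int \varphi\, d\mu'$ over $\mu' \in \M(f)$, i.e.\ $\mu$ is an equilibrium state for $(f, \varphi)$.

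The argument is short because the two nontrivial ingredients — the Ledrappier--Walters variational identity (which specializes to $h_\mu(f) = h_\nu(A)$ under the zero-fiber-entropy hypothesis) and the fact that $H_*\mu'$ is automatically $A$-invariant — are already recorded in the preliminaries. There is no genuine obstacle; the only point where mild care is warranted is that \eqref{eqn:EqualityMetricEntropy} must be applied to the arbitrary pushforward $\nu' = H_*\mu'$, not merely to the prescribed $\nu$. This is legitimate because $h(f,H^{-1}(x)) = 0$ is assumed to hold for every $x \in \T^d$, so the fiber-entropy integrand in \eqref{eqn:LedWalt} vanishes identically regardless of the integrating measure.
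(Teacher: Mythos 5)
Your proof is correct and follows essentially the same route as the paper: it uses the fiber-entropy hypothesis via \eqref{eqn:EqualityMetricEntropy} to equate $h_{\mu'}(f)+\int\varphi\,d\mu'$ with $h_{H_*\mu'}(A)+\int\phi\,dH_*\mu'$ for every competitor, then bounds this by the equilibrium value of $\nu$, which $\mu$ attains. The only difference is cosmetic — the paper phrases the comparison as a chain of suprema rather than measure-by-measure — so no further comment is needed.
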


\begin{proof}
Let $\nu$ be an equilibrium state for $(A,\phi)$. By the Riezs theorem and the compactness of the set of Borel probability measures on $\T^d$ we can guarantee the existence of an $f$-invariant measure $\mu$ such that $\nu=H_{\ast}\mu$ (see for example \cite[Lemma~4.3]{Bowen} for a similar construction). Moreover, by (\ref{eqn:EqualityMetricEntropy}) we have that
\begin{align*}
\sup \left\lbrace h_{\eta}(f)+\int \varphi d\eta: \eta\in \mathcal{M}(f) \right\rbrace 
&=\sup \left\lbrace h_{H_{\ast}\eta}(A)+\int \phi d H_{\ast}\eta: \eta\in \mathcal{M}(f) \right\rbrace,\\
&\leq \sup \left\lbrace h_{\hat{\nu}}(A)+\int \phi d\hat{\nu} : \hat{\nu}\in \mathcal{M}(A) \right\rbrace,\\
&\leq h_{\nu}(A)+\int \phi d\nu.
\end{align*}
Therefore, any $f$-invariant measure $\mu$ satisfying that $\nu=H_{\ast}\mu$ is an equilibrium state for $(f,\varphi)$.
\end{proof}  

\subsection{Rectangle structure}\label{sec.Rect}

We say that a DA partially hyperbolic diffeomorphism $f:\T^d\to\T^d$ has \textbf{rectangle structure} in the center bundle if:

\begin{enumerate}[label=\Alph*.]
\item \label{Hyp.R1} There exist a splitting $E^c=E^1\oplus E^2$ where each $E^i$ is a line-bundle and integrates to an $f$-invariant foliation $F^i$, for $i=1,2$.
\item \label{Hyp.R2} For every $x\in\T^d$, if $z,z'\in H^{-1}(x)$ and $z'\in \F^i(z)$ for some $1\leq i\leq 2$, then 
\[
 [z,z']_i\subset H^{-1}(x),
\]
where $[z,z']_i$ is the closed interval inside $\F^i(z)$ with end points $z$ and $z'$.
\item \label{Hyp.R3} For each $x\in \T^d,$ $H^{-1}(x)$ is a finite union of rectangles contained in a unique center leaf of $\mathcal{F}^c$.
\end{enumerate}

The \emph{rectangles} mentioned above are compact sets obtained in the following inductive procedure. Let $z_0,...,z_k$, with $1\leq k\leq \ell$, be points in $H^{-1}(x)$ such that $z_j\in \F^{i_j}(z_0)$. We construct the rectangle (of dimension $k$ and corner $z_0$) by starting with $R_1=[z_0,z_1]_{i_1}\subset \F^{i_1}(z_0)$. Taking $i_2\neq i_1$ we can define $R_2$ as the trace inside $\F^c(z_0)$ of the set obtained by sliding $R_1$ along $[z_0, z_2]_{i_2}\subset \F^{i_2}(z_0)$, that is,
\[
 R_2=\bigcup_{w\in[z_0, z_2]_{i_2}}[w,y(w)]_{i_1},
\]
where $[w,y(w)]_{i_1}$ is the image of $[z_0,z_1]_{i_1}$ by the $\F^{i_2}$-holonomy. Continuing this way, we can define $R_k$ as
\[
 R_k=\bigcup_{w\in[z_0, z_{k}]_{i_2}}R^{k-1}(w),
\] 
where $R^{k-1}(w)$ is a rectangle of dimension $k-1$ and corners $z_0,...,z_{k-1}$  obtained as the image of $R_{k-1}$ in the corresponding center manifold by the $\F^{i_k}$-holonomy sending $z_0$ in $w$.\\

In a recent work, Carrasco \emph{et. al.} \cite{CarrascoLizanaPujalsVasquez} proved that, if the center bundle of $f$ is strongly simple (it decomposes into one dimensional sub-bundles with global product structure), then it has rectangle structure in the center bundle. Moreover, they also proved that for every $x\in\T^d$ then:
\[
 h(f,H^{-1}(x))=0.
\]

\subsection{Disintegration of measures}
Let $(M,\B, \mu)$ be a probability space, where $M$ is a compact metric space, $\B$ the borelian $\sigma$-algebra and $\mu$ a probability measure. Let $\P$ be a partition of $M$ and $\hat{\B}=\pi_{\ast}\B$, $\hat{\mu}=\pi_{\ast}\mu$ where $\pi:M\rightarrow M/\P$ is the canonical projection that assigns to each point $x\in M$ the element $\P(x)$ of the partition that contains it, then $(\tilde{M}:=M/\P,\hat{\B},\hat{\mu})$ is a probability space.

\begin{definition}
A disintegration of $\mu$ with respect to $\mathcal{P}$ is a family $\{\mu_{P}\}_{P\in \mathcal{P}}$ of conditional probability measures on $M$ such that:
\begin{enumerate}
\item given $\phi\in C^{0}(M),$ then $P\mapsto \int \phi d\mu_{P}$ is measurable;
\item $\mu_{P}(P)=1, \hat{\mu}$-a.e.;
\item $\mu=\int_{\tilde{M}}\mu_{P}d\hat{\mu}$, i.e, if $\phi\in C^{0}(M),$ then $\int\phi d\mu=\int_{\tilde{M}}\int_{P}d\mu_{P}d\tilde{\mu}.$
\end{enumerate}\vspace{0.1cm}

When it is clear which partition we are referring to, we say that the family $\{\mu_{P}\}$ disintegrates the measure $\mu$. There exists an equivalent form of writing the disintegration formula
above: $$\mu=\int_{M}\mu_{x}d\mu$$ by considering the conditional measures $\mu_{x}, x\in M$ where $\mu_{x}=\mu_{y}$ if $y\in \mathcal{P}(x).$
\end{definition}

\begin{proposition}[\cite{OliveiraVianaErgodic}]
If $\{\mu_{P}\}_{P\in \mathcal{P}}$ and $\{\tilde{\mu}_{P}\}_{P\in \mathcal{P}}$ are disintegrations of $\mu$ with respect to $\mathcal{P}$, then $\mu_{P}=\tilde{\mu}_{P}$ for $\hat{\mu}$-almost every $P\in\mathcal{P}$.
\end{proposition}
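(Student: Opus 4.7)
The plan is to reduce the $\hat{\mu}$-almost sure equality of the two conditional families to the equality of their integrals against a countable dense family of continuous functions, and then conclude leaf-wise by the Riesz representation theorem.

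First, I would fix $\phi\in C^{0}(M)$ and consider the two maps on $\tilde{M}$
$$F_{\phi}(P)=\int\phi\, d\mu_{P},\qquad \tilde{F}_{\phi}(P)=\int\phi\, d\tilde{\mu}_{P},$$
which are well-defined and $\hat{\B}$-measurable by property (1) of the definition of disintegration. Given $A\in\hat{\B}$, the function $\phi\cdot(\mathbbm{1}_{A}\circ\pi)$ is bounded measurable on $M$, and applying property (3), extended from continuous to bounded measurable integrands by a standard monotone class argument, yields
$$\int_{A} F_{\phi}\, d\hat{\mu}=\int_{\pi^{-1}(A)}\phi\, d\mu=\int_{A} \tilde{F}_{\phi}\, d\hat{\mu}.$$
Since $A\in\hat{\B}$ is arbitrary, this forces $F_{\phi}=\tilde{F}_{\phi}$ $\hat{\mu}$-almost everywhere; denote the exceptional null set by $N_{\phi}$.

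Next, I would exhaust $C^{0}(M)$ by countability. Since $M$ is compact metric, $C^{0}(M)$ is separable, so one can pick a countable dense family $\{\phi_{n}\}_{n\in\N}$ and set $N=\bigcup_{n} N_{\phi_{n}}$, which is still $\hat{\mu}$-null. For every $P\notin N$ the identity $F_{\phi_{n}}(P)=\tilde{F}_{\phi_{n}}(P)$ holds for every $n$. Since $\mu_{P}$ and $\tilde{\mu}_{P}$ are Borel probability measures, the maps $\phi\mapsto\int\phi\, d\mu_{P}$ and $\phi\mapsto\int\phi\, d\tilde{\mu}_{P}$ are continuous in the sup-norm on $C^{0}(M)$, and density of $\{\phi_{n}\}$ then propagates the identity to every $\phi\in C^{0}(M)$. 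The Riesz representation theorem gives $\mu_{P}=\tilde{\mu}_{P}$ for all $P\notin N$, which is the claim.

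The only delicate point is the extension of property (3) from $\phi\in C^{0}(M)$ to bounded measurable integrands of the form $\phi\cdot(\mathbbm{1}_{A}\circ\pi)$; I expect this to be the main bookkeeping step. A clean way around it is a monotone class argument: property (3) is linear in the integrand and stable under bounded monotone limits, so the class of bounded integrands for which it holds contains $C^{0}(M)$ and is large enough to include all bounded $\B$-measurable functions, in particular $\phi\cdot(\mathbbm{1}_{A}\circ\pi)$. Once this technicality is settled, the rest of the argument is purely formal.
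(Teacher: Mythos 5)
The paper states this proposition without proof, citing Oliveira--Viana for it, so there is no in-text argument to compare against; your proof is correct and is essentially the standard one given in that reference (separability of $C^0(M)$, equality of the two conditional integrals $\hat{\mu}$-a.e.\ for each test function in a countable dense family, then Riesz on each fiber). The one step you flag as delicate — extending property (3) from continuous integrands to $\phi\cdot(\mathbbm{1}_A\circ\pi)$ by a monotone class argument, and using property (2) to pull $\mathbbm{1}_A(P)$ out of $\int_P(\cdot)\,d\mu_P$ — is indeed the only real bookkeeping, and it goes through as you describe.
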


The previous proposition asserts that disintegrations are essentially unique, when they exist. Consequently, for an invariant measure it follows that:
\begin{corollary}
If $f:M\to M$ preserves a probability measure $\mu$ and the partition $\mathcal{P}$, then $f_{\ast}\mu_{P}=\mu_{f(P)}$ $\hat{\mu}$-a.e.
\end{corollary}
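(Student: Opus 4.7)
The plan is to invoke the essential uniqueness of disintegrations from the preceding proposition. Since $f$ preserves the partition $\mathcal{P}$, for each $P \in \mathcal{P}$ there is a well-defined element $f(P) \in \mathcal{P}$. Define a candidate family $\{\nu_Q\}_{Q \in \mathcal{P}}$ by $\nu_{f(P)} := f_{\ast}\mu_{P}$. If I can verify that this family is a disintegration of $\mu$ with respect to $\mathcal{P}$, then by uniqueness it must coincide $\hat{\mu}$-a.e.\ with $\{\mu_Q\}$, which is precisely the desired identity.

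First I would check the three defining properties of a disintegration. Measurability of $Q \mapsto \int \phi \, d\nu_Q$ follows by writing $\int \phi \, d\nu_{f(P)} = \int \phi \circ f \, d\mu_P$ and using that $P \mapsto \int \phi \circ f \, d\mu_P$ is measurable (since $\phi \circ f \in C^0(M)$), together with the measurability of the map on $M/\mathcal{P}$ induced by $f$. Concentration is immediate: $\nu_{f(P)}(f(P)) = \mu_P(f^{-1}(f(P))) \geq \mu_P(P) = 1$ for $\hat{\mu}$-a.e.\ $P$.

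Second, I would verify the integral identity. For any $\phi \in C^0(M)$,
\begin{align*}
\int_{\tilde{M}} \left( \int \phi \, d\nu_Q \right) d\hat{\mu}(Q)
&= \int_{\tilde{M}} \left( \int \phi \circ f \, d\mu_P \right) d\hat{\mu}(P) \\
&= \int_M \phi \circ f \, d\mu = \int_M \phi \, d\mu,
\end{align*}
where the first equality uses the change of variables $Q = f(P)$ together with $f_{\ast}\hat{\mu} = \hat{\mu}$ (which holds because $f_{\ast}\mu = \mu$ and $\hat{\mu} = \pi_{\ast}\mu$), and the last equality uses the $f$-invariance of $\mu$. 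This shows $\{\nu_Q\}$ disintegrates $\mu$.

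The main issue to be careful about is the interpretation of ``preserves the partition,'' ensuring that $f$ induces a well-defined measurable transformation of the quotient $(\tilde{M}, \hat{\B}, \hat{\mu})$ onto itself, so that the change of variables above is legitimate; this is a routine consequence of the standard measurable-quotient construction. Once this is in place, the essential uniqueness from the preceding proposition gives $\nu_{f(P)} = \mu_{f(P)}$ for $\hat{\mu}$-a.e.\ $P$, which is exactly $f_{\ast}\mu_P = \mu_{f(P)}$.
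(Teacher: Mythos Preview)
Your proposal is correct and follows exactly the approach the paper intends: the corollary is stated immediately after the uniqueness proposition with the phrase ``Consequently, for an invariant measure it follows that,'' and no further proof is given. Your argument simply fills in the standard details of how uniqueness yields the conclusion, so there is nothing to compare.
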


\begin{definition}
We say that a partition $\P$ of $M$ is measurable with respect to probability measure $\mu$ if there exist a measurable family $\{A_{i}\}_{i\in \mathbb{N}}$ and a measurable set $C$ of full measure such that if $B\in \P$, then there exists a sequence $\{B_{i}\}_{i\in \N}$, where $B_{i}\in \{A_{i}, A_{i}^{c}\}$ such that $B\cap C=\bigcap_{i\in \N}B_{i}\cap C.$
\end{definition}

The next theorem guarantees the existence of disintegrations with respect to a measurable partition.

\begin{theorem}[Rokhlin's Disintegration \cite{Rokhlin}]\label{tro}
Let $\mathcal{P}$ be a measurable partition of a compact metric space $M$ and $\mu$ a Borel probability measure. Then, $\mu$ admits some disintegration with respect to $\mathcal{P}$.
\end{theorem}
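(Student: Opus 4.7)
The plan is to obtain the family $\{\mu_P\}_{P\in\P}$ as a martingale limit of naive conditional measures along an approximating sequence of countable partitions. First, I would exploit the measurability hypothesis: let $\{A_i\}_{i\in\N}$ and $C$ be as in the definition, and set $\P_n$ equal to the (countable) partition of $M$ generated by $A_1,\dots,A_n$. Then each $\P_n$ is a finite measurable partition, $\P_n\le \P_{n+1}$, and $\P=\bigvee_n \P_n$ modulo the null set $M\setminus C$. For each $n$ and each atom $Q\in\P_n$ with $\mu(Q)>0$, define the trivial conditional
\[
 \mu^n_Q(B)=\frac{\mu(B\cap Q)}{\mu(Q)},
\]
and for $x\in M$ set $\mu^n_x:=\mu^n_{\P_n(x)}$. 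These satisfy the disintegration formula for $\P_n$ tautologically.

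Next I would pass to the limit as $n\to\infty$. Fix a countable dense set $\{\phi_k\}_{k\in\N}\subset C^0(M)$ (possible since $M$ is compact metric, hence $C^0(M)$ is separable). For each $k$, the sequence of functions $x\mapsto \int \phi_k\, d\mu^n_x$ is exactly the conditional expectation $E(\phi_k\mid \pi_n^{-1}\hat\B)$ where $\pi_n\colon M\to M/\P_n$ is the quotient map. By the increasing martingale convergence theorem, this converges $\mu$-a.e.\ and in $L^1(\mu)$ to $E(\phi_k\mid \pi^{-1}\hat\B)$. Taking the countable union of the corresponding exceptional sets, we obtain a full-measure set $M_0\subset C$ such that for every $x\in M_0$ and every $k$, the limit
\[
 \Lambda_x(\phi_k):=\lim_{n\to\infty}\int \phi_k\, d\mu^n_x
\]
exists. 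Since each $\Lambda_x$ is a positive normalized linear functional on the dense set $\{\phi_k\}$, it extends uniquely to a positive continuous linear functional on $C^0(M)$, and by the Riesz representation theorem there is a Borel probability measure $\mu_x$ with $\int \phi\, d\mu_x=\Lambda_x(\phi)$. By construction $\mu_x=\mu_y$ whenever $\P(x)=\P(y)$ within $M_0$, so $\mu_x$ really depends only on the element $P=\P(x)$, and property (1) (measurability of $P\mapsto\int\phi\, d\mu_P$) follows from the measurability of the martingale limit. Property (3), the integration formula, is immediate by dominated convergence from the identity $\int\phi\, d\mu=\int \bigl(\int\phi\, d\mu^n_x\bigr) d\mu(x)$ valid at each finite stage.

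The delicate point, and the one I expect to be the main obstacle, is property (2): $\mu_P(P)=1$ for $\hat\mu$-a.e.\ $P$. Here the measurability of $\P$ is used crucially. Since $\P(x)\cap C=\bigcap_{i\in\N} B_i(x)\cap C$ with $B_i(x)\in\{A_i,A_i^c\}$, every atom $P\cap C$ is a countable intersection of sets each of which is a union of $\P_n$-atoms for some $n$. For any $A_i$ one checks that $\mu^n_x(A_i)\to \mathbf{1}_{A_i}(x)$ for $x\in M_0$ (because eventually $\P_n(x)\subset A_i$ or $\P_n(x)\subset A_i^c$), which passes to the limit to give $\mu_x(A_i)=\mathbf{1}_{A_i}(x)$, and hence $\mu_x\bigl(\P(x)\cap C\bigr)=1$. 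To make this rigorous one should replace the indicator functions $\mathbf{1}_{A_i}$ by continuous approximations, which is possible by regularity of Borel measures on compact metric spaces, shrinking the exceptional null set accordingly. This gives property (2) and completes the proof.
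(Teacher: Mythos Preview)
The paper does not prove this theorem; it is stated as a classical result and attributed to Rokhlin via the citation \cite{Rokhlin}, with no proof given. So there is nothing in the paper to compare your argument against.

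That said, your sketch is the standard martingale proof of Rokhlin's theorem and is essentially correct. The one point I would tighten is your treatment of property~(2). You correctly observe that for $n\ge i$ one has $\P_n(x)\subset A_i$ or $\P_n(x)\subset A_i^c$, hence $\mu^n_x(A_i)=\mathbf{1}_{A_i}(x)$ eventually. But your measure $\mu_x$ was produced via Riesz from the action of $\Lambda_x$ on continuous functions, so the identity $\mu_x(A_i)=\lim_n \mu^n_x(A_i)$ is not automatic from weak-$*$ convergence when $A_i$ is merely Borel. The clean fix is not to approximate $\mathbf{1}_{A_i}$ by continuous functions (which introduces an $\epsilon$ of error you then have to chase through a countable intersection), but rather to run a monotone-class argument: show that the collection of Borel sets $B$ for which $\mu_x(B)=E(\mathbf{1}_B\mid \pi^{-1}\hat\B)(x)$ holds $\mu$-a.e.\ contains the open sets (via your continuous-function construction and regularity) and is closed under monotone limits, hence is all of $\B$. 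Applying this to each $A_i$ gives $\mu_x(A_i)=\mathbf{1}_{A_i}(x)$ a.e., and then $\mu_x(\P(x))=1$ follows from the countable intersection as you indicate.
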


The partition by leaves of a foliation may be non-measurable in general. For instance, this is the case for the stable and unstable foliations of Anosov diffeomorphisms with respect to measures of non vanishing metric entropy. Instead, one must consider disintegrations on compact foliated boxes. This conditional measures will depend on the foliated boxes, however, in \cite[Lemma~ 3.2]{AvilaVianaWilkinson} they proved that this measures are defined up to scaling. That is, they are equivalence classes where one identifies any two (possibly infinite) measures that differ only by a constant factor. 

\begin{definition}
We say that a foliation $\F$ has atomic disintegration with respect to a measure $\mu$ if the conditional measures on any foliated box are sum of Dirac measures.
\end{definition}

Another way to define atomic disintegration is as follows: there exist a full measurable subset $Z$ such that $Z$ intersects all leaves in at most a countable set.\\

Even though the disintegration of a measure along a general foliation is defined in compact foliated boxes, it makes sense to say that a foliation $\F$ has a quantity $k\in\N$ of atoms per leaf. The meaning of ``per leaf'' should always be understood as a generic leaf, i.e. almost every leaf. That means that there is a set $A$ of $\mu$-full measure which intersects a generic leaf on exactly $k$ points.

\begin{definition}
We say that a measure is virtually hyperbolic if there is a full measure set which intersects the center leaf in at most one point.
\end{definition}

Let us finish this section by enunciating a well-known result known as the Measurable Choice Theorem. This result has appeared in the context of Decision Theory from Economics and as been proved by R. J. Aumann \cite{Aumann}:

\begin{theorem}[Measurable Choice Theorem]\label{th.Measurable}
Let $(T,\mu)$ be a $\sigma$-finite measure space, let $S$ be a Lebesgue space, and let $G$ be a measurable subset of $T\times S$ whose projection on $T$ is all of $T$. Then there is a measurable function $g:T\to S$, such that $(t, g(t))\in G$ for almost all $t\in T$.
\end{theorem}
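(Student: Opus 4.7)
The plan is to treat this as a measurable uniformization problem. By $\sigma$-finiteness of $\mu$ I may partition $T$ and assume $\mu(T)<\infty$, and by definition of a Lebesgue space I may identify $S$ (modulo a null set) with a Borel subset of the Cantor space $2^{\N}$, or equivalently with $[0,1]$. The goal is then to construct a $\mu$-measurable $g:T\to S$ whose graph is contained in $G$ up to a $\mu$-null exceptional set.

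The main descriptive set theory input I would use is the following: if $G$ is measurable in $T\times S$, then for every Borel $B\subseteq S$ the projection $\pi_T(G\cap(T\times B))$ is a Suslin set, hence universally measurable, and in particular belongs to the $\mu$-completion of the $\sigma$-algebra on $T$. Since the projection of $G$ is all of $T$ by hypothesis, this legitimizes passing to completions throughout the construction.

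With this fact in hand, I would construct $g$ by dyadic refinement on the Cantor model of $S$. Enumerating the cylinders at level $n$ as $C^n_1,\dots,C^n_{2^n}$, define
\[
 A^n_k = \pi_T\bigl(G\cap(T\times C^n_k)\bigr), \qquad B^n_k = A^n_k\setminus\bigcup_{j<k}A^n_j,
\]
which gives a universally measurable partition of $T$. Set $g_n(t)$ equal to a fixed representative of $C^n_k$ for $t\in B^n_k$; then the $g_n$ converge pointwise to a measurable function $g:T\to S$, because $g_{n+1}$ refines $g_n$ inside a cylinder of diameter $2^{-n}$.

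The main obstacle is that the naive dyadic limit only ensures $g(t)\in\overline{G_t}$, which is insufficient when the vertical sections $G_t$ are not closed. I would resolve this by invoking the Jankov--von Neumann uniformization theorem for Suslin sets, which furnishes a universally measurable selector directly from the hypothesis that the projection equals $T$; the dyadic scheme above is essentially an unfolding of this uniformization argument. Completing the measure $\mu$ then turns the universally measurable $g$ into a $\mu$-measurable one, and the exceptional null set coming from the completion yields the ``for almost all $t\in T$'' conclusion.
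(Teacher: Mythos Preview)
The paper does not prove this theorem at all: it is stated as a well-known result and attributed to Aumann with a citation, and is then used as a black box in the proof of Lemma~\ref{lem.MeasurableExtremal}. So there is no ``paper's own proof'' to compare against.

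Your sketch is broadly in the right direction, but it is worth flagging two points. First, the argument as written is somewhat circular: you set up the dyadic scheme, correctly observe that it only gives $g(t)\in\overline{G_t}$, and then invoke Jankov--von~Neumann to fix this---but Jankov--von~Neumann already yields the selector outright, so the dyadic construction is redundant scaffolding rather than a proof step. Second, and more substantively, Jankov--von~Neumann is stated for analytic subsets of products of Polish (or standard Borel) spaces, whereas here $(T,\mu)$ is an abstract $\sigma$-finite measure space. To apply it you need an intermediate reduction: since $G$ lies in the $\mu$-completion of the product $\sigma$-algebra, one finds a set $G'$ in the product $\sigma$-algebra with $G\triangle G'$ null in the first coordinate, and then factors through a countably generated sub-$\sigma$-algebra of $T$ to obtain a standard Borel model on which the uniformization theorem applies. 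This reduction is routine but not automatic, and Aumann's original proof handles exactly this passage; your sketch skips it.
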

\section{Main results}\label{sec.Main}
For now on we focus in DA partially diffeomorphisms in $\T^4$ with 2-dimensional center bundle. Consider the set 
\begin{equation}\label{eqn:DefC}
 C:=\{x\in \T^4:\#H^{-1}H(x)>1\}.
\end{equation}

We claim that $C$ is a measurable set. One may check that by first observing that by simply reproducing \textit{ipsis litteris} the proof of \cite[Lemma 3.2]{PonceTahzibiVaraoBernoulli} only changing $\T^3$ by $\T^4$ and $\R^3$ by $\R^4$ one obtains that the set $H(C)$ is a measurable set. Hence $C=H^{-1} H(C)$ is a measurable set. Moreover, notice that $C$ is $f$-invariant. \\

We are now able to properly state Theorem \ref{th.Ergodic}:

\begin{theorem}\label{main:ergodic}
Let $f:\mathbb{T}^{4}\rightarrow \mathbb{T}^{4}$ be a DA partially hyperbolic diffeomorphism dynamically coherent and with rectangle structure in the center bundle. Assume that $f$ preserves the orientation of $\F^{i}$, for  $i=1,2$. Then, if $\mu$ be an ergodic probability for $f$
\begin{enumerate}
\item If $\mu(C)=0$, $(f,\mu)$ is almost conjugate to an Anosov.
\item If $\mu(C)=1$, $C$ defines a partition such that $\mu$ has atomic disintegration with a finite number of atoms. 
\end{enumerate}
\end{theorem}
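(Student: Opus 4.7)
The plan is first to observe that since $C$ is $f$-invariant and measurable, the ergodicity of $\mu$ immediately forces $\mu(C) \in \{0,1\}$, which is the dichotomy. For the case $\mu(C) = 0$, I would set $Z = \T^4 \setminus C$ (a Borel set of full $\mu$-measure), and note that by the very definition of $C$ the restriction $H|_Z$ is injective. Combined with $H\circ f = A\circ H$ and the fact that $H_\ast(\mu|_Z) = H_\ast\mu$ has full mass in $H(Z)$, this exhibits $H$ as a measurable isomorphism between $(f,\mu)$ on a full measure set and the Anosov system $(A, H_\ast\mu)$ on a full measure set, which is exactly the statement that $(f,\mu)$ is almost conjugate to an Anosov.

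For the case $\mu(C) = 1$, I would exploit the rectangle structure (B)--(C). For $\mu$-a.e.\ $x$ the fiber $H^{-1}(H(x))$ is a nontrivial finite union of rectangles contained in a single center leaf $\F^c(x)$, so it must be nondegenerate in at least one of the line-bundle directions. Define
\[
 C_i := \{x\in C : H^{-1}(H(x)) \cap \F^i(x) \neq \{x\}\}, \qquad i=1,2.
\]
Each $C_i$ is $f$-invariant and measurable, so by ergodicity $\mu(C_i)\in\{0,1\}$, and since every rectangle of positive dimension projects nontrivially onto $\F^1$ or $\F^2$, at least one $\mu(C_i) = 1$. Without loss of generality, suppose $\mu(C_1) = 1$.

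The core step is then to disintegrate $\mu$ along the $1$-dimensional foliation $\F^1$ and prove the conditional measures are purely atomic with a uniformly finite number of atoms per leaf. For each $x\in C_1$, rectangle axiom \ref{Hyp.R2} gives a closed interval $I^1_x := H^{-1}(H(x))\cap \F^1(x)$ of uniformly bounded length (bounded by $2E$ coming from $\|\tH-\id\|_{C^0}\leq K$). I would adapt the argument of Ponce--Tahzibi--Varão to this setting: assume for contradiction that the conditional measure on a $\mu$-positive set of $\F^1$-leaves has a continuous (nonatomic) component supported inside these intervals. The Measurable Choice Theorem (Theorem \ref{th.Measurable}) produces a measurable selection inside these fibers, and one forwards/backwards iterates by $f$ along the $\F^1$-direction; combined with the uniform boundedness of $\diam(H^{-1}(\cdot))$ and the invariance $f_\ast\mu_P = \mu_{f(P)}$ of the disintegration, this yields a contradiction. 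Hence the disintegration along $\F^1$ is atomic, and the uniform diameter bound together with the bounded geometry of the rectangle fibers gives a finite (and essentially constant, by ergodicity) number of atoms per leaf.

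Finally, I would combine this with the symmetric analysis along $\F^2$ (either trivial disintegration if $\mu(C_2) = 0$, or atomic with finitely many atoms by the same argument if $\mu(C_2) = 1$) to conclude that the disintegration of $\mu$ along the full center foliation $\F^c$ is atomic with finitely many atoms per leaf, and that these atoms are exactly the intersections of the $H$-fiber with $\F^c$-leaves, so that indeed $C$ defines the desired partition. The main obstacle I anticipate is the atomic step on $\F^i$: carefully using the Measurable Choice Theorem to extract a selection that behaves well under the $f$-dynamics and drives the bounded-diameter contradiction, while at the same time producing an \emph{effective} (finite, not merely countable) bound on the number of atoms via the combinatorics of the rectangle decomposition and the uniform fiber diameter bound.
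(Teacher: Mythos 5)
Your overall architecture (ergodic dichotomy, trivial case $\mu(C)=0$, reduction to the one-dimensional sub-foliations via the rectangle structure, two-step passage $\F^1$ then $\F^2$) is the same skeleton as the paper's, but the two steps that carry the actual content are left as gestures, and the mechanisms you propose for them would not work. First, the atomicity of the conditional measures along the intervals $Q(x)=H^{-1}H(x)\cap\F^1(x)$: you say you would get a contradiction from the Measurable Choice Theorem plus iteration plus the uniform bound $\diam(\tH^{-1}(\tx))\leq 2E$. But these intervals lie in the \emph{center} direction, so there is no expansion or contraction available to shrink or stretch them under iteration, and a uniform diameter bound is perfectly compatible with a nonatomic conditional measure (Lebesgue measure on a bounded interval is nonatomic); the invariance $f_{\ast}\mu_P=\mu_{f(P)}$ alone gives nothing. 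The paper's proof of Lemma \ref{lem.AtomicDis} uses a genuinely different mechanism: the hypothesis that $f$ preserves the orientation of $\F^1$, a quantile/CDF parametrization $\psi(x,t)$ of each conditional measure along the oriented interval, and the resulting sub- and super-level sets $H_t$ and $G_t$, which are $f$-invariant, each of positive measure, yet cannot both be full (the monotonicity $t<\mu_{\omega}([0_\omega,\omega]_1)\leq t$ contradiction), violating ergodicity. The Measurable Choice Theorem plays no role there; in the paper it only appears later (Lemma \ref{lem.MeasurableExtremal}) to select extremal points for the equilibrium-state theorem.

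Second, finiteness (and in fact uniqueness per interval) of the atoms: your claim that the uniform fiber diameter and ``bounded geometry'' of the rectangles bound the number of atoms is not valid --- an atomic measure on a bounded interval can have countably many atoms. The paper obtains finiteness from dynamics and ergodicity, not geometry: the atom weight is non-decreasing along orbits, $\mu_x(\{x\})\leq\mu_{f(x)}(\{f(x)\})$, so the sets $H^+_\delta$ are invariant, ergodicity forces all atoms to have the same weight $\delta_0$, hence there are $1/\delta_0$ of them; then orientation preservation makes the sets of ``first'' and ``second'' atoms invariant, producing disjoint invariant sets of positive measure unless there is exactly one atom per $Q(x)$ (Lemma \ref{lem.OneAtomQ}). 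Finally, your last step (``combine with the symmetric analysis along $\F^2$'') also glosses over a real issue: atomicity of two one-dimensional marginal-type disintegrations does not by itself give atomicity along the two-dimensional elements of $\P$; the paper instead passes to the quotient $\widehat{C}=C/\Q$, runs the same one-atom argument for the quotient measure along the projected $\F^2$-intervals, and only then concludes one atom per rectangle, with finiteness of the total number of atoms coming from hypothesis \ref{Hyp.R3} (finitely many rectangles per fiber). As written, your proposal identifies the right intermediate objects but does not supply the arguments that make the theorem true.
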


Mixed derived from Anosov examples $g:\mathbb{T}^d\to \mathbb{T}^d$ introduced by Buzzi \emph{et. al.} \cite[Section~5]{BuzziFisherSambarinoVasquez} satisfy the dynamical coherent and rectangle structure hypothesis. In particular, the center foliation $\mathcal{F}^c$ admits two invariant 1-dimensional sub-foliations $\mathcal{F}^{cu}, \mathcal{F}^{cs}$ such that $H^{-1}(x)\cap \mathcal{F}^{cu}_{loc}(x)$ and $H^{-1}(x)\cap \mathcal{F}^{cs}_{loc}(x)$ are segments in the center foliation. Another class of derived from Anosov $f:\T^4\to \T^{4}$ that satisfy these assumptions was studied by Carrasco \emph{et. al.} \cite[Theorem~A and Section~3]{CarrascoLizanaPujalsVasquez}.

The virtually hyperbolicity result mention in Theorem \ref{th.Ergodic} follows from the next corollary.

\begin{corollary}\label{cor.main}
Under the assumptions of Theorem \ref{main:ergodic}. Let us assume that $\nu:=H_{\ast}\mu$ has full support and the semiconjugacy $H$ sends center leaves of $f$ to center leaves of $A$. If one of the following conditions is satisfied
\begin{enumerate}
     \item \label{it.2} The center direction of $A$ is expansive or contractive.
     \item \label{it.3} $H(\mathcal \F^i)$ is some invariant foliation of $A$, for each $i=1,2$. 
\end{enumerate}
Then, $\mu$ is virtually hyperbolic.
\end{corollary}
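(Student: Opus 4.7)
The plan is to invoke Theorem \ref{main:ergodic} and dichotomize on $\mu(C)\in\{0,1\}$, the only possibilities since $C$ is $f$-invariant and $\mu$ is ergodic. In both cases we aim to produce a measurable set of full $\mu$-measure that meets every leaf of $\F^c$ in at most one point. The main case is $\mu(C)=1$, where Theorem \ref{main:ergodic} gives an atomic disintegration of $\mu$ along $\F^c$ with an almost-surely constant number $k\ge 1$ of atoms per leaf; the task reduces to showing $k=1$ under condition (1) or (2).

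By the rectangle structure, the atoms of $\mu|_{\F^c(x)}$ all sit inside the rectangle $H^{-1}(H(x))\subset\F^c(x)$, and the Measurable Choice Theorem \ref{th.Measurable} provides measurable selectors $x\mapsto p_1(x),\dots,p_k(x)$ of these atoms. Suppose for contradiction $k\ge 2$ and pick distinct atoms $p=p_i(x)\ne q=p_j(x)$. Since they lie in the same rectangle, by the inductive rectangle construction they are joined by a sequence of $\F^1$- and $\F^2$-segments, so without loss of generality there is a non-degenerate $\F^1$-segment $\gamma\subset H^{-1}(H(x))$ joining $p$ to $q$. Under assumption (2), $H(\F^1)$ is a 1-dimensional invariant foliation of the Anosov $A$, hence uniformly expanded or contracted by $A$; under assumption (1), the 2-dimensional invariant subspace $H(E^c)$ is expansive (or contractive) for $A$, which yields the same conclusion for $H(\F^1)$. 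Combining the entropy equality $h_\mu(f)=h_\nu(A)$ from (\ref{eqn:EqualityMetricEntropy}) (valid since $h(f,H^{-1}(x))=0$ under the rectangle structure) with the Pesin/Margulis--Ruelle inequalities, the uniform expansion of $A$ along $H(\F^1)$ transfers to a positive Lyapunov exponent of $\mu$ for $f$ along $\F^1$. Hence the $\F^1$-length of $f^n(\gamma)$ inside $H^{-1}(H(f^nx))$ grows like $\lambda^n$, contradicting the uniform diameter bound $\diam\tH^{-1}(\tx)\le 2E$ on the lift. Therefore $k=1$, and the graph of $p_1$ is the sought full-measure set.

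In the case $\mu(C)=0$, the full-measure set is $B:=\T^4\setminus C$, on which $H$ is injective; two distinct points of $B$ on a common center leaf would, via the rectangle structure, admit a non-degenerate $\F^1$- or $\F^2$-segment, and the same expansion argument under (1) or (2) yields the contradiction. The full-support assumption on $\nu$ ensures the relevant $A$-side directions are non-trivial. The principal obstacle will be transferring the uniform expansion of $A$ along an invariant foliation $H(\F^i)$ into a positive Lyapunov exponent of $\mu$ for $f$ along $\F^i$, since $H$ is merely continuous; this is handled through the entropy identities above, combined with a direction-by-direction comparison enabled by the splitting $E^c=E^1\oplus E^2$.
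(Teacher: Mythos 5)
The proposal contains two significant gaps, and the second is fatal.

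\textbf{Misreading of Theorem~\ref{main:ergodic}.} You write that Theorem~\ref{main:ergodic} ``gives an atomic disintegration of $\mu$ along $\F^c$ with an almost-surely constant number $k\ge 1$ of atoms per leaf.'' It does not. The atomic disintegration there is relative to the partition $\P=\{H^{-1}H(x)\}$ whose elements are compact rectangle-unions; a single global center leaf contains uncountably many elements of $\P$, so ``finitely many atoms per $\P(x)$'' does not by itself bound the number of atoms per center leaf. Upgrading from ``finitely many per $\P(x)$'' to ``finitely many per global center leaf,'' and then to exactly one, is precisely the content of the corollary, and is where both hypotheses (full support of $\nu$ and expansiveness/invariance on the $A$-side) are actually used. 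The paper achieves this by fixing a Markov partition for $A$, disintegrating $\nu=H_*\mu$ along center plaques (the full-support hypothesis is what makes the Markov boundary $\nu$-negligible), obtaining a uniform bound $N$ on the number of atoms in any plaque of diameter $L$, and then pulling a hypothetical large configuration of atoms back by $A^{-n}$ (condition~\ref{it.2}) to land inside a single plaque of diameter $<L$, a contradiction; the reduction to a single atom is then the orientation argument of Lemma~\ref{lem.OneAtomQ} applied once along $\F^1$ and once transversally. Your proof uses neither the Markov partition, nor the full-support assumption, nor the invariance of the disintegration of $\nu$, all of which are essential here.

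\textbf{The Lyapunov-exponent transfer does not exist.} You acknowledge the obstacle yourself, but the proposed remedy (Ledrappier--Walters equality plus Margulis--Ruelle plus a ``direction-by-direction comparison'') does not produce a positive exponent for $f$ along $\F^1$, and in fact the sought conclusion is generically false. The semiconjugacy $H$ is only continuous and is constant on the rectangle $H^{-1}(H(x))$; in particular if $p,q\in H^{-1}(H(x))$ lie on a common $\F^1$-leaf then, by assumption~\ref{Hyp.R2}, the whole segment $\gamma=[p,q]_1$ lies in $H^{-1}(H(x))$, so $H(\gamma)$ is a \emph{single point}. Consequently $f^n(\gamma)\subset H^{-1}(A^nH(x))$ for every $n$, which has diameter at most $2E$ uniformly. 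This is perfectly compatible with a vanishing or negative $\F^1$-exponent for $\mu$ under $f$: the semiconjugacy collapses exactly the directions you want to see expanding, and the entropy equality $h_\mu(f)=h_\nu(A)$ constrains the sum of positive exponents, not the sign of the center one. The contradiction you aim for --- $|f^n(\gamma)|_1\sim\lambda^n$ versus the bound $2E$ --- never materializes because the premise (positive $\F^1$-exponent) is unjustified. Also, the sketch for $\mu(C)=0$ is not right: two distinct points of $\T^4\setminus C$ on a common center leaf have \emph{different} $H$-images, so the rectangle structure gives no segment inside a single preimage, and in fact the paper does not claim virtual hyperbolicity in that case (cf.\ the statement of Theorem~\ref{th.Ergodic}).
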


Now, we consider the following property: 
\begin{enumerate}[label=(H)]
\item \label{Hyp.5} $h(f, H^{-1}(x))=0$, for all $x\in \mathbb{T}^d$.
\end{enumerate}

As mention before in Section \ref{sec.Rect}, \cite[Theorem~4.1]{CarrascoLizanaPujalsVasquez} guarantees that DA partially hyperbolic diffeomorphisms with central bundle $E^{c}$ strongly simple (see definition \cite[Definition~1.4]{CarrascoLizanaPujalsVasquez}) also satisfy the assumption \ref{Hyp.5}. Furthermore, \cite[Corollary~5.2]{BuzziFisherSambarinoVasquez} guarantees that mixed derived from Anosov examples satisfy the assumption \mbox{\rm\ref{Hyp.5}}.\\

Let us recall that if $f:\T^4\to \T^4$ is a DA partially hyperbolic diffeomorphism with a dominated splitting, then the existence of equilibrium states associated to any continuous potential is guaranteed as a consequence of the work of Díaz \emph{et. al.} \cite[Corollary~1.3]{DiazFisherPacificoVieitez}. The assumption \ref{Hyp.5} will guarantee the existence of the equilibrium state as a consequence of Lemma \ref{lemma:ExistenceMu}. Moreover, Theorem \ref{th.Equilibrium} also gives a partial answer to the uniqueness problem of the equilibrium states. We now proceed to formaly state Theorem \ref{th.Equilibrium}:

\begin{theorem}\label{main:equilibrium}
Let $f:\mathbb{T}^{4}\rightarrow \mathbb{T}^{4}$ be a DA partially hyperbolic diffeomorphism dynamically coherent, with rectangle structure in the center bundle and satisfying \ref{Hyp.5}. Let us assume that $\phi$ is a continuous potential such that $(A,\phi)$ that has a unique equilibrium state with full support. Define the potential $\varphi=\phi\circ H$ and $\mu$ be any ergodic equilibrium state of $f$ with respect to $\varphi$. If $f$ preserves the orientation of $\F^{i}, \ i=1,2$ then:
\begin{enumerate}
    \item if $\mu(C)=0$, then $\mu$ is the unique equilibrium state;
    \item if $\mu(C)=1$, $C$ defines a partition such that $\mu$ has atomic disintegration with a finite number of atoms. Moreover, if the semiconjugacy $H$ sends center leaves of $f$ to center leaves of $A$ and one of the following conditions is satisfied
    \begin{enumerate}
     \item \label{it.Eq2} The center direction of $A$ is expansive or contractive.
     \item \label{it.Eq3} $H(\mathcal \F^i)$ is some invariant foliation of $A$, for each $i=1,2$. 
    \end{enumerate}
Then $\mu$ is virtually hyperbolic and it is not a unique equilibrium state.
\end{enumerate}
\end{theorem}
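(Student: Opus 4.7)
The plan is to use Bowen's semiconjugacy theorem to project $\mu$ to a measure on $\T^4$ for the linear Anosov $A$, and then combine the dichotomy of Theorem~\ref{main:ergodic} with Corollary~\ref{cor.main}. Set $\nu := H_*\mu$; by Bowen's theorem (recalled just before Lemma~\ref{lemma:ExistenceMu}), $\nu$ is an equilibrium state for $(A,\phi)$, so by hypothesis $\nu$ is \emph{the} unique such measure and has full support. Conversely, assumption~\ref{Hyp.5} together with Lemma~\ref{lemma:ExistenceMu} guarantees that every $f$-invariant lift of $\nu$ is an equilibrium state of $(f,\varphi)$. Thus the whole proof reduces to studying the $f$-invariant lifts of $\nu$.

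For item~(1), assume $\mu(C)=0$. I would exploit the identity $C=H^{-1}H(C)$ recorded just after the definition of $C$: for any $f$-invariant lift $\mu'$ of $\nu$,
\[
 \mu'(C)=\mu'(H^{-1}H(C))=\nu(H(C))=\mu(H^{-1}H(C))=\mu(C)=0,
\]
so both $\mu$ and $\mu'$ are concentrated on $C^c$, where $H$ is a Borel bijection. Therefore, for every Borel $B\subset C^c$, $\mu'(B)=\nu(H(B))=\mu(B)$, yielding $\mu=\mu'$. Note that this argument does not even require ergodicity of $\mu'$.

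For item~(2), assume $\mu(C)=1$. The atomic disintegration with finitely many atoms is part~(2) of Theorem~\ref{main:ergodic}, and virtual hyperbolicity under either additional hypothesis (a) or (b) is Corollary~\ref{cor.main}. To conclude non-uniqueness I would exhibit a second $f$-invariant lift of $\nu$: since $\mu(C)=1$, for $\nu$-a.e.\ $x$ the fiber $H^{-1}(x)$ is a finite union of non-degenerate rectangles contained in a single center leaf. Because $f$ preserves the orientations of $\F^1$ and $\F^2$, it acts on the corners of each such rectangle compatibly with their natural $\{\pm,\pm\}$-labelling, yielding several $f$-equivariant measurable selections $\supp\nu\to\T^4$ (their measurability being secured by the Measurable Choice Theorem~\ref{th.Measurable}). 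Pushing $\nu$ forward along two different such selections gives two distinct $f$-invariant lifts of $\nu$; both are equilibrium states by Lemma~\ref{lemma:ExistenceMu}, and at most one of them coincides with $\mu$.

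The main obstacle I anticipate is the non-uniqueness construction in item~(2): producing different measurable sections of $H$ is straightforward, but upgrading them to genuinely $f$-invariant lifts is not automatic. The orientation-preservation hypothesis enters here in an essential way, ruling out any flipping or permutation of rectangle corners by $f$ that would destroy the coherence of the corner labelling under iteration. Making the argument rigorous requires a careful combinatorial analysis of the $f$-action on the bundle of rectangle corners over $\supp\nu$, exploiting precisely the rectangle structure~\ref{Hyp.R1}--\ref{Hyp.R3}.
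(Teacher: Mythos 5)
Your reduction to the $f$-invariant lifts of $\nu$ and your proof of item (1) are essentially the paper's own argument (the paper tests against continuous functions rather than Borel subsets of $C^c$, but the mechanism --- $H_{\ast}\mu'=\nu=H_{\ast}\mu$, hence $\mu'(C)=0$, then injectivity of $H$ off $C$ --- is identical), and quoting Theorem \ref{main:ergodic} and Corollary \ref{cor.main} for the atomic disintegration and the virtual hyperbolicity is also exactly what the paper does. The genuine gap is the point you yourself flag: the construction of a second $f$-invariant lift of $\nu$. Two things are missing. First, the Measurable Choice Theorem \ref{th.Measurable} does not by itself ``secure'' measurability of your corner selections: to invoke it you must first prove that the bundle of rectangle corners over $\supp\nu$ is a measurable subset of the product, and even then the theorem returns \emph{some} measurable selection with no equivariance whatsoever, so its pushforward has no reason to be $f$-invariant. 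Second, the equivariance of a fixed $\{\pm,\pm\}$-corner labelling is problematic when a fiber $H^{-1}(x)$ consists of several rectangles: $f$ may permute the rectangles of $H^{-1}(x)$ among those of $H^{-1}(Ax)$, so ``the corner with a given label'' is not a well-defined point of the fiber, and orientation preservation alone does not tell you which rectangle to select coherently along orbits. Note also that your sketch never uses the additional hypotheses of item (2) (that $H$ sends center leaves to center leaves, together with (a) or (b)); since the non-uniqueness conclusion is only claimed under them, their absence from your argument is a warning sign that the selection step is where they are needed.

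The paper resolves precisely these two issues. Virtual hyperbolicity (via Corollary \ref{cor.main}) yields one atom $a(\hat{x})$ per center leaf, so each $\hat{x}$ in the quotient $\hat{M}=\T^4/\F^c$ determines a single collapse interval $\Q(\hat{x})$ containing the atom, and the second measure is $\eta=\int\delta_{b(\hat{x})}\,d\hat{\mu}$ with $b(\hat{x})$ an extremal point of $\Q(\hat{x})$. Measurability of $\{b(\hat{x})\}$ is the content of Lemma \ref{lem.MeasurableExtremal}, proved under \ref{it.Eq3}: since $H(\F^1)$ is then a linear invariant foliation of $A$, one flows backwards time $1/n$ along it, applies the Measurable Choice Theorem to the auxiliary sets $H^{-1}\left(\varphi(-1/n,H(C))\right)$, and takes a monotone limit that lands exactly at the extremal point, which is canonically defined and hence $f$-equivariant because $f$ preserves the orientations of $\F^1,\F^2$. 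Invariance and ergodicity of $\eta$ follow, $\eta\neq\mu$ by essential uniqueness of disintegration, $\int\varphi\,d\eta=\int\varphi\,d\mu$ since $H(a(\hat{x}))=H(b(\hat{x}))$, and $h_{\eta}(f)=h_{\mu}(f)$ because $(f,\eta)$ and $(f,\mu)$ are measure-theoretically isomorphic (alternatively, as you propose, Lemma \ref{lemma:ExistenceMu} with \ref{Hyp.5}). Your plan can likely be repaired by replacing the corner labelling with this canonical extremal-point selection on the collapse interval of the atom, but as written the equivariant measurable selection --- the technical heart of this case --- is not established.
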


\section{Proof of Theorem \ref{th.Ergodic}}\label{sec.ProofMain}

From now on we assume $\mu(C)=1$ and let us prove that the partition determined by $C$ has atomic disintegration. That is, consider the partition:
\[
 \P:=\{\P(x):=H^{-1}H(x)|x\in C\}.\\
\] 

Let us prove that $\P$ is a measurable partition with respect to any measure considered. Let $\{A_i\}_{i \in \N}$ be a countable basis for the topology of $\T^4$. Now for any point $x \in \T^4$ we have sets $B_i \equiv B_i(x) \in \{A_i, A^c_i\}$ such that $\{ x \} = \cap_{i \in \N} B_i$. Since $\{H^{-1}(A_i)\}$ is a measurable set (because $A_i$ is an open set and $H$ is continuous) notice that 
\[
 H^{-1}(x) = \bigcap_{i \in \N}H^{-1}(B_i).
\]
Thus proving that $\P$ is a measurable partition. Moreover, it is easy to see that $\P$ is left invariant by $f$, that is $f(\P(x))=P(f(x))$.\\

Assume, without loss of generality, that $\F^1$ is oriented and $f$ preserves its orientation. We define another partition $\Q$ as the one whose elements are the connected components of the intersection of elements of $\P$ and $\F^1$. That is
\[
 \Q:=\{Q(x)=\F^1(x)\cap \P(x)|x\in C\}.\\
\]

Recall that, by the rectangle structure, $H^{-1}(z)$ is a finite union of rectangles in $\F^c$, so we can write for each $x\in C$
\begin{equation}\label{eqn.UnionRectangles}
 \P(x)=\bigcup_{j=1}^{n_x}R_j(x),
\end{equation}
where $n_x$ represents the number of rectangles in the class and $R_j(x)$ denotes a rectangle of dimension $1\leq k_j=k_j(x)\leq 2$ with corners $z_0,...,z_{k_j}$. Moreover, assumption \ref{Hyp.R2} guarantees that $Q(x)$ has only one connected component, an interval or a point. Therefore, the foliation of each element of $\P$ by $\F^1$ is a foliation by compact leaves. Thus, we can consider $\Q$ as a measurable partition. Indeed, any foliation with compact leaves can be consider as a measurable partition, see \cite[Proposition~ 3.7]{AvilaVianaWilkinson2}. Let us denote the conditional measures on $\Q$ by $\mu_x$.\\

\begin{figure}[t!]
\centering
\captionsetup{justification=centering}

\begin{minipage}[c]{.7\linewidth}
\centering

\newrgbcolor{qqttzz}{0 0.2 0.6}
\newrgbcolor{ccqqqq}{0.8 0 0}

\psset{xunit=0.5cm,yunit=0.5cm,algebraic=true,dotstyle=o,dotsize=3pt 0,linewidth=0.8pt,arrowsize=3pt 2,arrowinset=0.25}

\begin{pspicture*}(-3.5,-8)(19.56,7.23)

\parametricplot[linewidth=1.2pt,linecolor=ccqqqq]{-0.2406942442470834}{0.24560912946714195}{1*16.73*cos(t)+0*16.73*sin(t)+-11.22|0*16.73*cos(t)+1*16.73*sin(t)+0.4}
\parametricplot[linestyle=dotted,linecolor=ccqqqq]{-0.38249366103447713}{0.3774431667194532}{1*16.57*cos(t)+0*16.57*sin(t)+-11.06|0*16.57*cos(t)+1*16.57*sin(t)+0.36}
\parametricplot[linestyle=dotted]{1.4906069730008062}{1.6614294498905402}{1*47.58*cos(t)+0*47.58*sin(t)+5.78|0*47.58*cos(t)+1*47.58*sin(t)+-47.57}

\rput[tl](5.05,6.44){\ccqqqq{$ \mathcal{F}^1(z) $}}
\rput[tl](1.53,-0.6){$ \mathcal{F}^2(z) $}

\psbrace(6.03,-3.6)(6.01,4.5){$\mathcal{Q}(z)$}

\begin{scriptsize}
\psdots[dotstyle=*,linecolor=ccqqqq](5.03,-3.59)
\rput[bl](4.1,-3.6){\ccqqqq{$z_0$}}
\psdots[dotstyle=*,linecolor=ccqqqq](5.51,0.02)
\rput[bl](4.8,0.3){\ccqqqq{$z$}}
\psdots[dotstyle=*,linecolor=ccqqqq](5.01,4.47)
\rput[bl](4.22,4.3){\ccqqqq{$z_1$}}
\end{scriptsize}

\end{pspicture*}

\vspace{-1cm}
\subcaption{$R_j$ is a rectangle of dimension 1 contained in a $\F^1$-leaf}\label{fig.RecDim1F1}
\end{minipage}

\vspace{-2cm}
\begin{minipage}[c]{.7\linewidth}
\centering

\newrgbcolor{qqttzz}{0 0.2 0.6}
\newrgbcolor{ccqqqq}{0.8 0 0}

\psset{xunit=0.5cm,yunit=0.5cm,algebraic=true,dotstyle=o,dotsize=3pt 0,linewidth=0.8pt,arrowsize=3pt 2,arrowinset=0.25}

\begin{pspicture*}(-3.5,-8)(21.85,11.37)

\parametricplot[linewidth=1.2pt,linestyle=dotted,linecolor=qqttzz]{-0.2406942442470834}{0.24560912946714195}{1*16.73*cos(t)+0*16.73*sin(t)+-11.22|0*16.73*cos(t)+1*16.73*sin(t)+0.4}
\parametricplot[linestyle=dotted]{1.4592798033166172}{1.7051363958390253}{1*60.59*cos(t)+0*60.59*sin(t)+6.57|0*60.59*cos(t)+1*60.59*sin(t)+-60.06}
\parametricplot[linewidth=1.2pt]{1.4758004302769274}{1.6799377399158195}{1*62.95*cos(t)+0*62.95*sin(t)+6.48|0*62.95*cos(t)+1*62.95*sin(t)+-62.41}
\psline[linewidth=0.4pt,linecolor=ccqqqq]{->}(5.51,0.53)(6.43,1.43)

\rput[tl](5.62,5.19){\qqttzz{$ \mathcal{F}^1(z) $}}
\rput[tl](1.53,0.25){$ \mathcal{F}^2(z) $}

\begin{scriptsize}
\psdots[dotstyle=*,linecolor=ccqqqq](5.51,0.53)
\rput[bl](4.8,1){\ccqqqq{$z$}}
\rput[bl](6.43,1.37){\ccqqqq{$\mathcal{Q}(z)$}}
\psdots[dotstyle=*](-0.38,0.16)
\rput[bl](-0.69,-0.64){$z_0$}
\psdots[dotstyle=*](12.45,0.25)
\rput[bl](12.3,-0.67){$z_1$}
\end{scriptsize}

\end{pspicture*}

\vspace{-1.5cm}
\subcaption{$R_j$ is a rectangle of dimension 1 contained in a $\F^2$-leaf}\label{fig.RecDim1F2}
\end{minipage}

\vspace{0.5cm}
\begin{minipage}[c]{.7\linewidth}
\centering

\newrgbcolor{qqttzz}{0 0.2 0.6}
\newrgbcolor{ccqqqq}{0.8 0 0}

\psset{xunit=0.55cm,yunit=0.55cm,algebraic=true,dotstyle=o,dotsize=3pt 0,linewidth=0.8pt,arrowsize=3pt 2,arrowinset=0.25}

\begin{pspicture*}(-3.5,-8)(19.56,7.23)

\parametricplot[linewidth=1.2pt]{1.4658919836717454}{1.675700669918048}{1*76.4*cos(t)+0*76.4*sin(t)+6|0*76.4*cos(t)+1*76.4*sin(t)+-79.98}
\parametricplot[linewidth=1.2pt,linecolor=qqttzz]{-0.23260396708662157}{0.23260396708662137}{1*17.35*cos(t)+0*17.35*sin(t)+-18.89|0*17.35*cos(t)+1*17.35*sin(t)+0}
\parametricplot[linewidth=1.2pt,linecolor=qqttzz]{-0.24231811194623631}{0.24231811194623612}{1*16.67*cos(t)+0*16.67*sin(t)+-2.18|0*16.67*cos(t)+1*16.67*sin(t)+0}
\parametricplot[linewidth=1.2pt]{1.4517784610528233}{1.68981419253697}{1*67.38*cos(t)+0*67.38*sin(t)+6|0*67.38*cos(t)+1*67.38*sin(t)+-62.9}
\parametricplot[linewidth=0.4pt,linestyle=dotted]{1.438933038681401}{1.7070124324631315}{1*59.92*cos(t)+0*59.92*sin(t)+6.34|0*59.92*cos(t)+1*59.92*sin(t)+-62.39}
\parametricplot[linewidth=0.4pt,linestyle=dotted]{1.4366131522745456}{1.7048701232109778}{1*59.88*cos(t)+0*59.88*sin(t)+6.35|0*59.88*cos(t)+1*59.88*sin(t)+-61.35}
\parametricplot[linewidth=0.4pt,linestyle=dotted]{1.4545880574749945}{1.6891027161937202}{1*68.47*cos(t)+0*68.47*sin(t)+6.52|0*68.47*cos(t)+1*68.47*sin(t)+-69}
\parametricplot[linewidth=0.4pt,linestyle=dotted]{1.4362722835818533}{1.7053205450573163}{1*59.72*cos(t)+0*59.72*sin(t)+6.48|0*59.72*cos(t)+1*59.72*sin(t)+-59.19}
\parametricplot[linewidth=0.4pt,linestyle=dotted]{1.4398908084292725}{1.7045415640600527}{1*60.7*cos(t)+0*60.7*sin(t)+6.53|0*60.7*cos(t)+1*60.7*sin(t)+-59.19}
\parametricplot[linewidth=0.4pt,linestyle=dotted]{1.4369775988146314}{1.7047383570510684}{1*59.99*cos(t)+0*59.99*sin(t)+6.36|0*59.99*cos(t)+1*59.99*sin(t)+-57.46}
\parametricplot[linewidth=0.4pt,linestyle=dotted]{1.4275639066583594}{1.7112163309832855}{1*56.64*cos(t)+0*56.64*sin(t)+6.13|0*56.64*cos(t)+1*56.64*sin(t)+-53.06}
\parametricplot[linewidth=0.4pt,linestyle=dotted,linecolor=qqttzz]{-0.2589490416790463}{0.2561277029838513}{1*15.73*cos(t)+0*15.73*sin(t)+-16.22|0*15.73*cos(t)+1*15.73*sin(t)+0.13}
\parametricplot[linewidth=0.4pt,linestyle=dotted,linecolor=qqttzz]{-0.24847858790406097}{0.2533679762640946}{1*16.16*cos(t)+0*16.16*sin(t)+-15.63|0*16.16*cos(t)+1*16.16*sin(t)+0.16}
\parametricplot[linewidth=0.4pt,linestyle=dotted,linecolor=qqttzz]{-0.21560038250441327}{0.21120664301181347}{1*18.97*cos(t)+0*18.97*sin(t)+-17.52|0*18.97*cos(t)+1*18.97*sin(t)+0.32}
\parametricplot[linewidth=0.4pt,linestyle=dotted,linecolor=qqttzz]{-0.22569590480210344}{0.22017112155274524}{1*18.19*cos(t)+0*18.19*sin(t)+-15.73|0*18.19*cos(t)+1*18.19*sin(t)+0.39}
\parametricplot[linewidth=0.4pt,linestyle=dotted,linecolor=qqttzz]{-0.2299891503973468}{0.23557478716500682}{1*17.44*cos(t)+0*17.44*sin(t)+-13.94|0*17.44*cos(t)+1*17.44*sin(t)+0.34}
\parametricplot[linewidth=0.4pt,linestyle=dotted,linecolor=qqttzz]{-0.23475855500593656}{0.22522388125080922}{1*17.67*cos(t)+0*17.67*sin(t)+-13.19|0*17.67*cos(t)+1*17.67*sin(t)+0.5}
\parametricplot[linewidth=1.2pt,linecolor=ccqqqq]{-0.2406942442470834}{0.24560912946714195}{1*16.73*cos(t)+0*16.73*sin(t)+-11.22|0*16.73*cos(t)+1*16.73*sin(t)+0.4}
\parametricplot[linewidth=0.4pt,linestyle=dotted,linecolor=qqttzz]{-0.2368613212533699}{0.23192095946279598}{1*17.34*cos(t)+0*17.34*sin(t)+-10.85|0*17.34*cos(t)+1*17.34*sin(t)+0.49}
\parametricplot[linewidth=0.4pt,linestyle=dotted,linecolor=qqttzz]{-0.25504126325191834}{0.244983345635518}{1*16.28*cos(t)+0*16.28*sin(t)+-8.77|0*16.28*cos(t)+1*16.28*sin(t)+0.52}
\parametricplot[linewidth=0.4pt,linestyle=dotted,linecolor=qqttzz]{-0.2644294392483273}{0.25447439575352954}{1*15.69*cos(t)+0*15.69*sin(t)+-7.15|0*15.69*cos(t)+1*15.69*sin(t)+0.5}
\parametricplot[linewidth=0.4pt,linestyle=dotted,linecolor=qqttzz]{-0.23990094560670006}{0.24518545590870444}{1*16.76*cos(t)+0*16.76*sin(t)+-7.27|0*16.76*cos(t)+1*16.76*sin(t)+0.34}
\parametricplot[linewidth=0.4pt,linestyle=dotted,linecolor=qqttzz]{-0.24954157024380397}{0.2550438078858211}{1*16.11*cos(t)+0*16.11*sin(t)+-5.58|0*16.11*cos(t)+1*16.11*sin(t)+0.29}
\parametricplot[linewidth=0.4pt,linestyle=dotted,linecolor=qqttzz]{-0.27735663855019776}{0.2768195775981685}{1*14.69*cos(t)+0*14.69*sin(t)+-3.14|0*14.69*cos(t)+1*14.69*sin(t)+0.28}
\parametricplot[linewidth=0.4pt,linestyle=dotted,linecolor=qqttzz]{-0.25942436688335313}{0.2651887834136855}{1*15.48*cos(t)+0*15.48*sin(t)+-2.92|0*15.48*cos(t)+1*15.48*sin(t)+0.15}
\parametricplot[linewidth=0.4pt,linestyle=dotted,linecolor=qqttzz]{-0.25925503532936034}{0.25861054882642875}{1*15.65*cos(t)+0*15.65*sin(t)+-2.12|0*15.65*cos(t)+1*15.65*sin(t)+0.11}
\parametricplot[linewidth=0.4pt,linestyle=dotted,linecolor=ccqqqq]{-0.38249366103447713}{0.3774431667194532}{1*16.57*cos(t)+0*16.57*sin(t)+-11.06|0*16.57*cos(t)+1*16.57*sin(t)+0.36}

\rput[tl](5.06,6.56){\ccqqqq{$ \mathcal{F}^1(z) $}}

\psbrace(6.03,-3.4)(6.01,4.3){$\mathcal{Q}(z)$}

\begin{scriptsize}
\psdots[dotstyle=*,linecolor=qqttzz](-2,-4)
\rput[bl](-3.26,-5.11){\qqttzz{$z_0$}}
\psdots[dotstyle=*,linecolor=blue](14,-4)
\rput[bl](14.77,-5.04){\blue{$z_1$}}
\rput[bl](6.3,-5.2){$\mathcal{F}^2$}
\psdots[dotstyle=*,linecolor=qqttzz](-2,4)
\rput[bl](-3.29,4.11){\qqttzz{$z_2$}}
\rput[bl](-3.48,0.14){\qqttzz{$\mathcal{F}^1$}}
\psdots[dotstyle=*,linecolor=ccqqqq](5.51,0.02)
\rput[bl](5.0,-0.2){\ccqqqq{$z$}}
\end{scriptsize}

\end{pspicture*}

\vspace{-0.5cm}
\subcaption{$R_j$ is a rectangle of dimension 2}\label{fig.RecDim2}
\end{minipage}

\caption{Partition $\Q$.}\label{fig.PartQx}
\end{figure}

It is easy to see that the partition $\Q$ is $f$-invariant and, therefore, $f_*\mu_x=\mu_{f(x)}$. Consider $\pi:C\rightarrow \widehat{C}:=C/\Q$ the canonical projection that assigns to each point $x\in C$ the element $\Q(x)$ of the partition that contains it.  Denote the quotient measure as $\hat{\mu}=\pi_{\ast}\mu$. \\


\begin{lemma}\label{lem.AtomicDis}
The measure $\mu$ has atomic disintegration with respect to the partition $\Q$.
\end{lemma}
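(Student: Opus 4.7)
The plan is to exploit the $f$-invariance of the partition $\Q$, the orientation-preserving action of $f$ on the $\F^1$-intervals, and the ergodicity of $\mu$, in order to force each conditional $\mu_x$ to be purely atomic. First I would confirm that $\Q$ is an $f$-invariant measurable partition; Rokhlin's theorem then provides the disintegration $\{\mu_x\}$ and the invariance gives $f_\ast\mu_x = \mu_{f(x)}$ for $\hat\mu$-a.e.\ $x$. Because $f$ preserves the orientation of $\F^1$ and each $Q(x)$ is a connected subset of an $\F^1$-leaf, the restriction $f|_{Q(x)} \colon Q(x) \to Q(f(x))$ is an order-preserving homeomorphism between compact intervals (or between single points in the degenerate case). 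In particular, atoms of $\mu_x$ are mapped bijectively onto atoms of $\mu_{f(x)}$ of identical mass, and the continuous parts $\mu_x^c$ are pushed onto one another.

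Next I would introduce the relevant ergodic invariants. Writing $\mu_x = \mu_x^{\mathrm{at}} + \mu_x^c$ for the atomic/continuous Lebesgue decomposition, set $c(x) := \mu_x^c(Q(x))$. By the equivariance $f_\ast\mu_x = \mu_{f(x)}$ this quantity is $f$-invariant, and ergodicity of $\mu$ therefore makes it $\mu$-a.e.\ constant, say equal to some $c \in [0,1]$. The atomicity claim is now exactly the statement $c = 0$. Analogously one may fix any $\varepsilon > 0$ and observe that the atom count $n_\varepsilon(x) := \#\{y \in Q(x) : \mu_x(\{y\}) \ge \varepsilon\}$ is $f$-invariant and a.e.\ equal to the finite constant $n_\varepsilon \le 1/\varepsilon$, a fact that will be reused when the finiteness of atoms per leaf is addressed separately.

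The central and hardest step is to exclude the possibility $c > 0$. Here I would combine the bounded-diameter property of the Franks semiconjugacy, namely $\diam(f^n Q(x)) = \diam(Q(f^n x)) \le 2E$ for every $n \in \Z$, with the finite rectangle decomposition \eqref{eqn.UnionRectangles} of $\P(x)$. The uniform diameter bound along the whole orbit forces the Birkhoff average of $\log|Df|_{\F^1}|$ on $Q(x)$ to vanish, so $f$ acts on the $Q$-intervals with no net expansion or contraction along the center line-bundle $\F^1$. Arguing by contradiction from $c > 0$, I would apply the Measurable Choice Theorem (Theorem \ref{th.Measurable}) to select a measurable map $g \colon C \to \T^4$ with $g(x) \in \supp(\mu_x^c)$, and then track the two-sided orbit of $g(x)$ through the finitely many rectangles composing $\P(f^n x)$. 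The expected contradiction is that a continuously distributed mass $\mu_x^c$ cannot be permuted $f$-equivariantly inside a structure with only finitely many distinguishable rectangle-slots per $\P$-class without violating either the $f$-invariance of $c$ or the ergodicity of $\mu$. This last combinatorial/measure-theoretic step---extracting a genuine contradiction from $c > 0$ using the rigidity provided by the rectangle structure---is the principal obstacle I anticipate.
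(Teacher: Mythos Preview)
Your setup is correct up through the point where you reduce the problem to showing $c=0$: the decomposition $\mu_x=\mu_x^{\mathrm{at}}+\mu_x^c$, the $f$-invariance of $c(x)=\mu_x^c(Q(x))$, and the ergodicity argument making $c$ constant are all fine. The gap is in the final step, and the tools you propose there do not do the job.

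Neither the bounded-diameter/zero-Lyapunov observation nor the rectangle decomposition of $\P(x)$ is relevant to ruling out a continuous part on a single interval $Q(x)$. The rectangle structure describes how $\P(x)$ splits into finitely many $\F^1\times\F^2$ pieces, but $Q(x)$ is already one $\F^1$-segment; there are no ``finitely many slots'' to exploit at this scale. Likewise, a vanishing average of $\log|Df|_{\F^1}|$ is perfectly compatible with continuous conditional measures (think of a circle rotation), so no contradiction emerges from that direction. Selecting a point $g(x)\in\supp\mu_x^c$ via Theorem~\ref{th.Measurable} gives you a measurable section, but nothing forces its orbit to behave badly.

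What you are missing is the cumulative-distribution invariant. For $y\in Q(x)$ set $F_x(y):=\mu_x\big([0_x,y]_1\big)$, where $0_x$ is the lower endpoint of $Q(x)$ in the fixed $\F^1$-orientation. Because $f|_{Q(x)}$ is an order-preserving homeomorphism onto $Q(f(x))$ and $f_\ast\mu_x=\mu_{f(x)}$, one has $F_{f(x)}(f(y))\ge F_x(y)$ (with equality when endpoints match), so the sublevel sets $\{y:F_x(y)\le t\}$ are essentially $f$-invariant. If $\mu_x$ is non-atomic on a set of positive $\hat\mu$-measure, then $F_x$ is continuous and for any $t\in(0,1)$ the set $H_t$ of points with $F$-value at most $t$ and its complement both carry positive $\mu$-mass; ergodicity then forces both to have full measure, and a point in their common orbit-saturation yields $t<F\le t$. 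This is exactly the mechanism the paper uses (via the inverse ``quantile'' map $\theta_x(t)$), and it requires none of the rectangle machinery or differential estimates you invoked.
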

\begin{proof}
We want to show that the conditional measure $\mu_x$ is a countable linear combination of Dirac masses for $\hat{\mu}$-almost every $Q(x)\in \Q$. By absurd assume there exist a set $\hat{\Lambda}\subset \Q$ with positive $\hat{\mu}$-measure such that for every $Q(x)\in\hat{\Lambda}$ the measure $\mu_x$ is not atomic. Moreover, by the invariance of the disintegration, $\hat{\Lambda}$ can be assumed to be invariant and, by the ergodicity, of full measure.\\

Let $Q=Q(x_0)\in\hat{\Lambda}\cap\supp(\hat{\mu})$ and let $\B$ be a foliated (by $\F^1$) box around $Q$. That is, some image of a topological embedding 
\[
 \phi: D^{3}\times D^1\to \T^4,
\]
where $D^k$ is the closed unit disk in $\R^k$ and, such that every plaque $P_x=\phi(\{x\}\times D^1)$ is contained in a leaf of $\F^1$. Let us identified $\B$ with the product $D^{3}\times D^1$ through the corresponding homeomorphism. Let $\hat{V}$ be an open neighborhood of $Q$ small enough so it is contained in $\B$. Moreover, since $\tH^{-1}(\tx)$ is uniformly bounded we can assume that $\B$ contains $\P(x)$ for every $x\in D^3$. \\

Consider the following map 
\begin{eqnarray*}
\psi: D^3 \times [0,1] & \rightarrow & \B\\
 (x,t) &\mapsto & (x,\theta_x(t))
\end{eqnarray*}
where $(x,\theta_x(t))$ is defined as the higher point in the local leaf $Q(x)\subset \B$ such that $\mu_x([x,\theta_x(t)]_1)=t$.\\

Notice that $\psi$ is an invertible map when restricted to its image. Moreover, since we are assuming a non atomic disintegration, $\psi^{-1}$ is a continuous map restricted to the second coordinate and a measurable map when restricted to the first coordinate. Maps of these type are known as Caratheodory functions and these are measurable maps (\cite[Lemma 4.51]{Aliprantis}).\\

Consider the set $H_t^0:=\psi\left(\Sigma \times [0,t]\right)$, which is measurable since $\psi^{-1}$ is Caratheodory. Thus, the set  $H_t=\cup_{n\in\Z}f^n(H_t^0)$ forms an invariant measurable set. Notice that by the definition of $\psi$ we have that if $0<t<1$
\begin{align*}
 \mu(H_t)&=\int \mu_x(H_t\cap Q(x))d\hat{\mu}(Q(x)),\\
         &\geq \int_{\hat{R}}\mu_x(H_t^0\cap Q(x))d\hat{\mu}(Q(x)),\\
         &= \int_{\hat{R}}\mu_x([x,\theta_x(t)]_1)d\hat{\mu}(Q(x)),\\
         &=\hat{\mu}(\hat{R})t>0. 
\end{align*}
On the other hand, define $G_t^0=\left(H_t^0\right)^c$ and the $f$-invariant set $G_t=\cup_{n\in\Z}f^n(G_t^0)$. In a similar manner as before we have
\[
 \mu(G_t)\geq \hat{\mu}(\hat{R})(1-t)>0.
\]
Therefore, by the ergodicity, both sets should have full measure. Although, this would imply that their intersection also should have full measure but we claim this is not the case. In fact, if it were true, for $\hat{\mu}$-almost every $Q(x)$ 
\[
 \mu_x(H_t\cap G_t\cap Q(x))=1.
\]
But if $\omega$ belongs to $H_t\cap G_t\cap Q(x)$, without loss of generality, we may assume that for some $n\in\N$, $\omega\in f^{-n}(H_t^0)\cap G_t^0$. Hence, since $f$ preserves orientation, it is easy to see that
\[  
 t<\mu_{\omega}([0_{\omega},\omega]_1)=(f^n)_{\ast}\mu_{\omega}\left(f^n([0_{\omega},\omega]_1)\right)\leq \mu_{f^n(\omega)}\left(\left[0_{f^n(\omega)},f^n(\omega)\right]_1\right)\leq t.
\]
This is an absurd, which implies that the disintegration of $\mu$ is atomic for $\hat{\mu}$-almost every point.
\end{proof}

We have proved that $\mu$ has atomic disintegration with respect the partition $\Q$. We now want to see that there is a finite number of atoms on the disintegration considered. In order to do that we first need to prove the measurability of certain sets.\\

Consider $\B$ a foliated (by $\F^1$) box, as before, and identify $\B$ with the product $D^{3}\times D^1$ through the corresponding homeomorphism.\\

Fix $\delta>0$, and consider the set 
\[
 H_\delta = \{ x \in \B| \mu_x(\{x\}) \geq \delta \}.
\]
Let us see that this is a measurable set. To do so, consider a countable basis $\mathcal{V}$ of the topology of $\T^4$. From Rokhlin's theorem we know that the map $x\mapsto \mu_x(V)$ is measurable (up to measure zero) for any measurable set $V$. Therefore, by Lusin's theorem, given any $\varepsilon>0$ there exist a compact set $K_{\varepsilon}\subset D^3$ such that $\hat{\mu}(K_{\varepsilon})>1-\varepsilon$ and $x\mapsto\mu_x(V)$ is continuous on $K_{\varepsilon}$, for every $V\in\mathcal{V}$. In particular, $x\mapsto\mu_x$ is continuous with respect to the weak* topology for any $x$ in $K_{\varepsilon}$.\\

Let $\varepsilon>0$ be fixed. For each $x\in C$, let $A(x)$ be the set of atoms of $\mu_x$. It is clear that the set
\begin{equation}\label{eqn.DefGamma}
 \Gamma_{\delta}(x):=\{a\in A(x):\mu_x(\{a\})\geq \delta\},
\end{equation}
is finite, and hence compact. Furthermore, the definition of $K_{\varepsilon}$ ensures that the function $x\mapsto\Gamma_{\delta}(x)$ is upper semi-continuous on $x\in K_{\varepsilon}$. Therefore,
\[
 \Gamma(\varepsilon,\delta):=\{(x,a):x\in K_{\varepsilon}\text{ and }a\in\Gamma_{\delta}(x)\},
\]
is a closed set. Then, $\cup_{n}\Gamma(1/n,\delta)$ is a (measurable) full measure subset of $H_\delta$. Thus, $H_\delta$ is a measurable set (up to measure zero).\\

Consider $\{\B_k:k\in\N\}$ a countable cover of $\T^4$ by foliated boxes. Proceeding as before we obtain the measurable sets $H_\delta^k$ of atoms of measure bigger or equal to $\delta$ in each foliated box $\B_k$. Therefore, 
\[
 H_{\delta}^+:=\bigcup_{k\in\N}H_{\delta}^k=\{x\in C:\mu_x(\{x\})\geq\delta\}
\] 
is also measurable.\\

\begin{lemma}\label{lem.OneAtomQ}
 $\hat{\mu}$-almost every $Q(x)$ contains only one atom.
\end{lemma}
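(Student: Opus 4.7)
The plan is to argue by contradiction: assume a positive $\hat\mu$-measure set of $\Q$-leaves carries two or more atoms of $\mu_x$, and use the $f$-preserved orientation of $\F^1$ together with the ergodicity of $\mu$ to build two disjoint $f$-invariant sets of full measure, exactly as in the spirit of the argument in Lemma \ref{lem.AtomicDis}.

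First I would reduce to the case where a uniform lower bound on atom weights is available. Since $f_{\ast}\mu_x = \mu_{f(x)}$ carries atoms to atoms preserving their weights, the function $x \mapsto |\Gamma_{\delta}(x)|$, with $\Gamma_{\delta}$ as in \eqref{eqn.DefGamma}, is constant on each element of $\Q$ and $f$-invariant; by ergodicity of $\mu$ (pushed to $\hat\mu$), it equals a constant $N(\delta)$ on a full $\hat\mu$-measure set. If the lemma failed, the $f$-invariant set of leaves supporting at least two atoms would, again by ergodicity, have full measure, so the $f$-invariant weight $w_2(x)$ of the second-largest atom of $\mu_x$ is a.e.\ equal to some constant $c > 0$. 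In particular $N(c) \geq 2$ on a full-measure set.

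Next I would use the orientation of $\F^1$ to define, $\hat\mu$-almost everywhere, two measurable sections $a_1(x)$ and $a_2(x)$, the \emph{lowest} and \emph{second-lowest} atoms in $\Gamma_c(x) \subset Q(x)$ along the oriented interval $Q(x)$. Measurability follows from the machinery set up just before the lemma: the set $H_c^+$ is measurable; on each Lusin-compact set $K_{\varepsilon}$, $x \mapsto \Gamma_c(x)$ is upper semicontinuous with finite values, so the $i$-th ordered point varies measurably on $K_{\varepsilon}$; exhausting by the $K_{\varepsilon}$ gives measurability on a full-measure set (alternatively the Measurable Choice Theorem \ref{th.Measurable} applies directly). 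Set $A_i := \{a_i(x) : Q(x) \in \hat{C}\}$ for $i=1,2$. Because $f$ preserves the orientation of $\F^1$ and maps atoms bijectively to atoms while preserving their weights, it sends the $i$-th lowest $\Gamma_c$-atom of $Q(x)$ to the $i$-th lowest $\Gamma_c$-atom of $Q(f(x))$; hence $A_1$ and $A_2$ are $f$-invariant modulo null sets, and disjoint by construction.

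To conclude, I compute
\[
 \mu(A_i) = \int \mu_x(\{a_i(x)\}) \, d\hat\mu(Q(x)) \geq c > 0,
\]
so the ergodicity of $\mu$ forces $\mu(A_1) = \mu(A_2) = 1$, contradicting $A_1 \cap A_2 = \emptyset$. The step I expect to be most delicate is the measurable selection of $a_1$ and $a_2$: verifying that the ``$i$-th lowest atom of weight $\geq c$'' depends measurably on the leaf. This is not conceptually deep, but it requires carefully tying together the measurability of $H_c^+$, the Lusin/Rokhlin regularity of $x \mapsto \mu_x$, and the upper semicontinuity of $\Gamma_c$ already established in the excerpt; once that is in hand, the orientation-plus-ergodicity contradiction runs through cleanly.
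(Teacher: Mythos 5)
Your argument is correct and takes essentially the same route as the paper: both proofs use the fact that $f$ preserves the $\F^1$-orientation to build disjoint, $f$-invariant, measurable sections selecting the ``first'' and ``second'' atoms along each $Q(x)$, and then invoke ergodicity of $\mu$ to show these disjoint invariant sets would both have full measure, a contradiction. You streamline a bit — working directly with the atom sets $A_1$, $A_2$ and their positive $\mu$-measure via the disintegration formula, rather than the paper's detour through saturated ball-neighborhoods $B(a)$, $B(b)$ and their orbit hulls $H(a)=\bigcup_n f^n(B(a))$, and handling the general atom count rather than reducing to the $n=2$ case — but the key mechanism (orientation preservation plus ergodicity) and the measurability machinery ($H_\delta^+$, $\Gamma_\delta$, Lusin sets) are the same.
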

 
\begin{proof}
Let $x\in \M$ and $\delta\geq 0$. Consider the set $H_{\delta}^+$ as before and notice that 
\[
 \delta\leq\mu_x(\{x\})\leq f_{\ast}\mu_x(\{f(x)\})=\mu_{f(x)}(\{f(x)\}).
\]
Therefore, $H_{\delta}^+$ is invariant and, by ergodicity, it has measure zero or one. We know that $\mu(H_1^+)=0$ and $\mu(H_0^+)=1$. Let $\delta_0$ be the discontinuity point of the function $\delta \mapsto \mu(H_\delta^+)$, for $\delta \in [0,1]$. Hence $\mu(H_{\delta_0}^+)=1$, that means the weight of the atoms are all equal to $\delta_0$. Therefore there are $n=1/\delta_0$ atoms on each element of the partition $\Q$.\\

Let us see that the disintegration of $\mu$ on $\Q$ has one atom per local leaf. Assume by contradiction that $n=2$, as the case of finite atoms is similar. Let $a(x)$ and $b(x)$ be the two atoms of $\mu_x$. Without loss of generality, let us assume that $a(x)<b(x)$, where ``$<$'' is the fixed order in $\F^1$. Consider 
\[
 A:=\{a(x): x\in C\}\quand B:=\{b(x): x\in C\},
\]
the sets of first and second atoms respectively. Since $f$ preserves the orientation in $\F^1$, it is easy to see that $A$ and $B$ are invariant sets.\\

Let $Q=Q(x_0)\in\supp\hat{\mu}$ and let $\hat{V}$ be an open neighborhood of $Q$. Consider the disjoint sets 
\[
 B(a):=\bigcup_{Q(x)\in \hat{V}}\{x\}\times B(a(x))\quand B(b):=\bigcup_{Q(x)\in \hat{V}}\{x\}\times B(b(x)),
\]
where $B(a(x))$ and $B(b(x))$ are two disjoint closed balls in $Q(x)$ around $a(x)$ and $b(x)$ respectively.  Notice that, following the proof of the measurability of $H_{\delta}^+$ by substituting the set $\Gamma_{\delta}(x)$ in \eqref{eqn.DefGamma} by $B(a(x))$, we can prove that $B(a)$ and $B(b)$ are both measurable sets. By the definition of $B(a)$ and $B(b)$, their saturation by $\Q$ coincide, that is, $\pi(B(a)) = \pi(B(b))$. Therefore, $B(a)$ and $B(b)$ have positive $\mu$-measure.\\

Let us define the $f$-invariant sets
\[
 H(a):=\bigcup_{n\in\Z}f^n(B(a))\quand H(b):=\bigcup_{n\in\Z}f^n(B(b)).
\]
We claim that $\mu(H(a)\cap H(b))=0$. In fact, if it is not true we have that
\[
0<\mu(H(a)\cap H(b))=\int\mu_x(H(a)\cap H(b)\cap Q(x))d\hat{\mu}.
\]
Therefore, there must exist $\hat{\Lambda}\subset \hat{C}$ of positive $\hat{\mu}$-measure such that for every $Q(x)\in\hat{\Lambda}$ 
\[
 \mu_x(H(a)\cap H(b)\cap Q(x))>0.
\]
Hence, $a(x)$ or $b(x)$ must belong to the intersection of $H(a)\cap H(b)$. Without loss of generality, let us assume that there exists $n\in\Z$ such that $a(x)\in f^n(B(b))\cap B(a)$. Therefore, we have that $f^{-n}(a(x))=b(y)$ for some $Q(y)\in\hat{V}$. However, this contradicts the invariance of $A$, and we proved our claim.\\

Now, by ergodicity of $\mu$, the sets $H(a)$ and $H(b)$ should have full measure and have zero measure intersection. Absurd, therefore we have only one atom on $\Q(x)$ which proves our claim. \\
\end{proof}

Let denote the atom found in Lemma \ref{lem.OneAtomQ} by $a(x)$, that is 
\begin{equation}\label{eq.Etaxz}
 \mu_x=\delta_{a(x)}.
\end{equation}
We now want to see that the disintegration of $\mu$ on $\P$ has only one atom in each connected component of every element of the partition.\\

Recall that $\widehat{C}:=C/\Q$. Define $\hat{f}:\widehat{C}\to\widehat{C}$ by $\hat{f}(\hat{z}):=\widehat{f(z)}$, which satisfies 
\[
 \pi\circ f=\hat{f}\circ\pi.
\]
Notice that by \eqref{eqn.UnionRectangles} we can identify $\widehat{\P(x)}$ with $n_x$ connected components in the $\F^2$ foliation. That means the space $\widehat{C}$ has now a one dimensional foliation coming from this quotient. Consider the partition $\widehat{\Q}$ given by
\[
 \widehat{\Q}:=\{\widehat{R(x)}: x\in C\},
\]
where $R(x)$ is the rectangle in $\P(x)$ containing $x$. Moreover, notice that $\widehat{R(x)}$ can be identified with the interval $[c_0(x),c_1(x)]_2$, where $c_0(x)$ and $c_1(x)$ are the corners of $R(x)$ in the same $\F^2$-leaf. Consequently, proceeding as before, the conditional measures $\hat{\eta}_x$ defined by the partition $\widehat{\Q}$ for the measure $\hat{\mu}$, have at most one atom in each $\widehat{R(x)}$ that we denote by $\hat{a}(x)$. Thus,
\begin{equation}\label{eqn.Etax}
 \hat{\eta}_x=\delta_{\hat{a}(x)}.
\end{equation}
Combining this with \eqref{eq.Etaxz} we have that $a_j(x)\in\pi^{-1}\left(\hat{a}(x)\right)\cap R_j(x)$ is the only one atom per rectangle $R_j(x)$. This concludes the proof of our result.\\

\begin{remark}
For the higher dimensional case the proof follows in a similar manner. More precisely, if $dim E^c=k$ after proving the atomic decomposition of the conditional measures of $\mu$ defined by the partition $\Q$, we proceed by reducing the dimension to $k-1$ using a quotient procedure to define the conditional measures \eqref{eqn.Etax}. Continuing in this manner we keep reducing the dimension until getting dimension 1 and conclude our proof.
\end{remark}

\section{Proof of Corollary \ref{cor.main}}\label{sec.ProofCor}

We are left with the task of proving that if $H$ sends center leaves of $f$ to center leaves of $A$ and if one of the  conditions \ref{it.2} or \ref{it.3} are satisfied, then $\mu$ is virtually hyperbolic. 

First, let us assume \ref{it.2} is valid. Moreover, we assume that the center direction of $A$ is expanding, otherwise we work with $f^{-1}$.\\

By the proof of Theorem \ref{main:ergodic}, there are at most countably many elements in $\P$ with positive measure, we get a full measurable subset $\mathcal{M}\subset\mathbb{T}^4$ which intersects each center leaf in at most countably many points. Furthermore, we claim that there exist finitely many atoms of $\mu$ per (global) center leaf. In fact, this was proved in \cite[Proposition~3.2]{CrisostomoTahzibi}. Although they assume one dimensional center foliation, under our assumptions their proof could be applied. Let us recall the main steps. \\

Assume by contradiction that every full measurable subset of $\mathcal{M}$ intersects any typical center leaves in infinitely many points. Define $\nu=H_{\ast}\mu$ which is an invariant measure by the linear hyperbolic automorphism. Let $R_i$ be the Markov partition for $A$ and consider the partition $\Q:=\{\F^c_R(x): x\in R_i \text{ for some }i\}$, where $\F^c_R(x)$ denotes the connected component of $\F^c(x)\cap R_i$ containing $x$. The partition $\Q$ is measurable and we denote $\nu_x$ the disintegration of $\nu$ along the elements of $\Q$. The assumption of full support of $\nu$ guarantees it gives zero mass to the boundary of the Markov partition.\\

As $H(\mathcal{M})$ intersects typical leaves in a countable number of points, $\nu_x$ must be atomic. Moreover, there exists a natural number $\alpha_0\in\mathbb{N}$ such that $\nu_x$ contains exactly $\alpha_0$ atoms for $\nu$-almost every $x$ (see \cite[Lemma~3.3]{CrisostomoTahzibi}). Hence, given a fixed $L\in\mathbb{R}_+$, there exist $N\in\mathbb{N}$ such that the number of atoms in any typical center plaque of diameter $L$ is at most $N$. We are assuming that $H(\mathcal{M})$ intrinsically intersects center leaves in infinitely many points (or non uniformly finite). Taking $D\subset\mathcal{F}^c(x)$ with more than $N$ atoms. By backward contraction along central leaves by $A$ there exists $n>0$ such that the diameter of $A^{-n}(D)$ is less that $L$. As $\nu$ is invariant and the uniqueness of the disintegration, we get a center plaque with diameter less than $L$ containing more than $N$ atoms, which is absurd and establishes our claim.\\

We have proved that the number of atoms is finite and constant by ergodicity on almost every center leaf. The task is now to conclude that, since $f$ preserves orientation, the number of atoms is one. In order to do this first consider the set of atoms in each $\F^1$-leaf. Proceeding as in the proof of Lemma \ref{lem.OneAtomQ}, we can prove that there must be only one atom per $\F^1$-leaf.  Now consider the space $\widetilde{C}:= C/\sim$, where $ x \sim y$ iff $y \in \mathcal \F^1(y)$. The way we should see $\tilde{C}$ is as turning the center foliation (which is a plane) into a 1-dimensional segment. Let us denote this new foliation as $\tilde{\Q}$. Notice that the disintegration of $\mu$ in the partition given by $\tilde{\Q}$ is exactly the quotient measure $\hat \eta^ x$.\\

Since $\mathcal \F^1$ has an orientation, we may define a transversal orientation by the following way: a vector $v \in T_x \F_{loc}^c(x)$ points in the positive direction if for any positive vector $w \in T_x \F_{loc}^c(x)$ we have $\omega_x(v,w) >0$, where $\omega_x$ is the restriction of the volume form to $\F_{loc}^c(x)$. \\

Now consider the extremal atoms per central leaf. By left extremal atom we consider the atom whose projection by the map $\pi:C\to\widetilde{C}$ is the left extreme one (see Figure \ref{fig.GlobalAtom}). Since $f$ preserves the orientation in $\F^1$, then $f$ preserves the transversal orientation. Once again, proceeding as the proof of Lemma \ref{lem.OneAtomQ} we conclude that there is only one atom per global center leaf. Therefore, $\mu$ is virtually hyperbolic.\\

On the other hand, if \ref{it.3} is valid then $H(\F^1)$ must coincides with $E^s_A$ or $E_A^u$. Without loss of generality, let us assume $H(\F^1)$ coincides with $E_A^u$. By the proof of Theorem \ref{main:ergodic}, the set $\mathcal{M}$ intersects each  $\F^1$ leaf in at most countably many points. Proceeding as before, using the $\F^1$ foliation instead the center one, one can also conclude that $\mu$ is virtually hyperbolic.

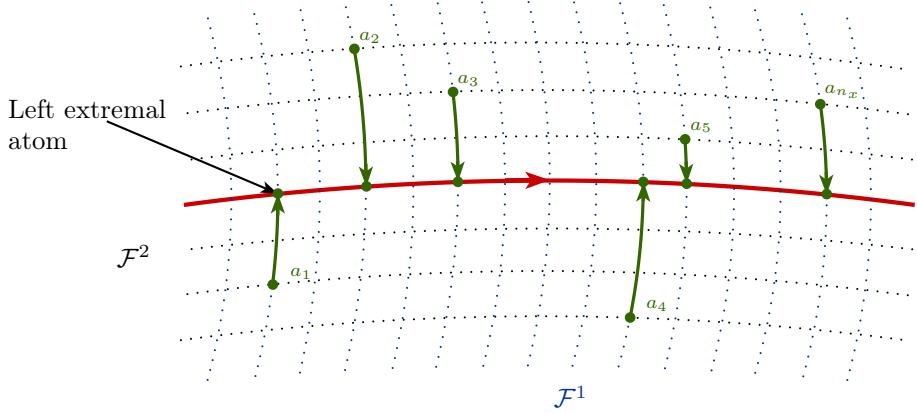
\begin{figure}
\centering
\captionsetup{justification=centering}

\newrgbcolor{ccqqqq}{0.8 0 0}
\newrgbcolor{qqttzz}{0 0.2 0.6}
\newrgbcolor{ttwwqq}{0.2 0.4 0}

\psset{xunit=0.6cm,yunit=0.6cm,algebraic=true,dotstyle=o,dotsize=3pt 0,linewidth=0.8pt,arrowsize=3pt 2,arrowinset=0.25}

\begin{pspicture*}(-6.5,-8)(19.56,7.23)

\parametricplot[linestyle=dotted]{1.438933038681401}{1.7070124324631315}{1*59.92*cos(t)+0*59.92*sin(t)+6.34|0*59.92*cos(t)+1*59.92*sin(t)+-62.39}
\parametricplot[linestyle=dotted]{1.4366131522745456}{1.7048701232109778}{1*59.88*cos(t)+0*59.88*sin(t)+6.35|0*59.88*cos(t)+1*59.88*sin(t)+-61.35}
\parametricplot[linestyle=dotted]{1.4545880574749945}{1.6891027161937202}{1*68.47*cos(t)+0*68.47*sin(t)+6.52|0*68.47*cos(t)+1*68.47*sin(t)+-69}
\parametricplot[linewidth=1.6pt,linecolor=ccqqqq]{1.4362722835818533}{1.7053205450573163}{1*59.72*cos(t)+0*59.72*sin(t)+6.48|0*59.72*cos(t)+1*59.72*sin(t)+-59.19}
\parametricplot[linestyle=dotted]{1.4398908084292725}{1.7045415640600527}{1*60.7*cos(t)+0*60.7*sin(t)+6.53|0*60.7*cos(t)+1*60.7*sin(t)+-59.19}
\parametricplot[linestyle=dotted]{1.4369775988146314}{1.7047383570510684}{1*59.99*cos(t)+0*59.99*sin(t)+6.36|0*59.99*cos(t)+1*59.99*sin(t)+-57.46}
\parametricplot[linestyle=dotted]{1.4275639066583594}{1.7112163309832855}{1*56.64*cos(t)+0*56.64*sin(t)+6.13|0*56.64*cos(t)+1*56.64*sin(t)+-53.06}
\parametricplot[linestyle=dotted,linecolor=qqttzz]{-0.2589490416790463}{0.2561277029838513}{1*15.73*cos(t)+0*15.73*sin(t)+-16.22|0*15.73*cos(t)+1*15.73*sin(t)+0.13}
\parametricplot[linestyle=dotted,linecolor=qqttzz]{-0.24847858790406097}{0.2533679762640946}{1*16.16*cos(t)+0*16.16*sin(t)+-15.63|0*16.16*cos(t)+1*16.16*sin(t)+0.16}
\parametricplot[linestyle=dotted,linecolor=qqttzz]{-0.21560038250441327}{0.21120664301181347}{1*18.97*cos(t)+0*18.97*sin(t)+-17.52|0*18.97*cos(t)+1*18.97*sin(t)+0.32}
\parametricplot[linestyle=dotted,linecolor=qqttzz]{-0.22569590480210344}{0.22017112155274524}{1*18.19*cos(t)+0*18.19*sin(t)+-15.73|0*18.19*cos(t)+1*18.19*sin(t)+0.39}
\parametricplot[linestyle=dotted,linecolor=qqttzz]{-0.2299891503973468}{0.23557478716500682}{1*17.44*cos(t)+0*17.44*sin(t)+-13.94|0*17.44*cos(t)+1*17.44*sin(t)+0.34}
\parametricplot[linestyle=dotted,linecolor=qqttzz]{-0.23475855500593656}{0.22522388125080922}{1*17.67*cos(t)+0*17.67*sin(t)+-13.19|0*17.67*cos(t)+1*17.67*sin(t)+0.5}
\parametricplot[linestyle=dotted,linecolor=qqttzz]{-0.2406942442470834}{0.24560912946714195}{1*16.73*cos(t)+0*16.73*sin(t)+-11.22|0*16.73*cos(t)+1*16.73*sin(t)+0.4}
\parametricplot[linestyle=dotted,linecolor=qqttzz]{-0.2368613212533699}{0.23192095946279598}{1*17.34*cos(t)+0*17.34*sin(t)+-10.85|0*17.34*cos(t)+1*17.34*sin(t)+0.49}
\parametricplot[linestyle=dotted,linecolor=qqttzz]{-0.25504126325191834}{0.244983345635518}{1*16.28*cos(t)+0*16.28*sin(t)+-8.77|0*16.28*cos(t)+1*16.28*sin(t)+0.52}
\parametricplot[linestyle=dotted,linecolor=qqttzz]{-0.2644294392483273}{0.25447439575352954}{1*15.69*cos(t)+0*15.69*sin(t)+-7.15|0*15.69*cos(t)+1*15.69*sin(t)+0.5}
\parametricplot[linestyle=dotted,linecolor=qqttzz]{-0.23990094560670006}{0.24518545590870444}{1*16.76*cos(t)+0*16.76*sin(t)+-7.27|0*16.76*cos(t)+1*16.76*sin(t)+0.34}
\parametricplot[linestyle=dotted,linecolor=qqttzz]{-0.24954157024380397}{0.2550438078858211}{1*16.11*cos(t)+0*16.11*sin(t)+-5.58|0*16.11*cos(t)+1*16.11*sin(t)+0.29}
\parametricplot[linestyle=dotted,linecolor=qqttzz]{-0.27735663855019776}{0.2768195775981685}{1*14.69*cos(t)+0*14.69*sin(t)+-3.14|0*14.69*cos(t)+1*14.69*sin(t)+0.28}
\parametricplot[linestyle=dotted,linecolor=qqttzz]{-0.25942436688335313}{0.2651887834136855}{1*15.48*cos(t)+0*15.48*sin(t)+-2.92|0*15.48*cos(t)+1*15.48*sin(t)+0.15}
\parametricplot[linestyle=dotted,linecolor=qqttzz]{-0.25925503532936034}{0.25861054882642875}{1*15.65*cos(t)+0*15.65*sin(t)+-2.12|0*15.65*cos(t)+1*15.65*sin(t)+0.11}
\parametricplot[linewidth=1.2pt,linecolor=ttwwqq]{-0.1197877952429538}{0.0049240162956248066}{1*16.16*cos(t)+0*16.16*sin(t)+-15.63|0*16.16*cos(t)+1*16.16*sin(t)+0.16}
\parametricplot[linewidth=1.2pt,linecolor=ttwwqq]{8.409078352180899E-4}{0.16892944583905878}{1*18.19*cos(t)+0*18.19*sin(t)+-15.73|0*18.19*cos(t)+1*18.19*sin(t)+0.39}
\parametricplot[linewidth=1.2pt,linecolor=ttwwqq]{2.1025287395711277E-5}{0.11292633657533116}{1*17.67*cos(t)+0*17.67*sin(t)+-13.19|0*17.67*cos(t)+1*17.67*sin(t)+0.5}
\parametricplot[linewidth=1.2pt,linecolor=ttwwqq]{-0.1921390198810604}{3.514068882600108E-4}{1*15.69*cos(t)+0*15.69*sin(t)+-7.15|0*15.69*cos(t)+1*15.69*sin(t)+0.5}
\parametricplot[linewidth=1.2pt,linecolor=ttwwqq]{0.007043515370644108}{0.06566601556149111}{1*16.76*cos(t)+0*16.76*sin(t)+-7.27|0*16.76*cos(t)+1*16.76*sin(t)+0.34}
\parametricplot[linewidth=1.2pt,linecolor=ttwwqq]{0.0048587683820751305}{0.1341039757655995}{1*15.48*cos(t)+0*15.48*sin(t)+-2.92|0*15.48*cos(t)+1*15.48*sin(t)+0.15}

\psline[linewidth=1.8pt,linecolor=ccqqqq]{->}(5.51,0.53)(6.49,0.54)
\psline[linewidth=1.2pt,linecolor=ttwwqq]{->}(0.53,-0.24)(0.53,0.24)
\psline[linewidth=1.2pt,linecolor=ttwwqq]{->}(2.46,0.9)(2.47,0.4)
\psline[linewidth=1.2pt,linecolor=ttwwqq]{->}(4.47,0.95)(4.48,0.51)
\psline[linewidth=1.2pt,linecolor=ttwwqq]{->}(8.54,0.08)(8.55,0.5)
\psline[linewidth=1.2pt,linecolor=ttwwqq]{->}(9.48,0.95)(9.49,0.46)
\psline[linewidth=1.2pt,linecolor=ttwwqq]{->}(12.55,0.72)(12.56,0.22)
\psline{->}(-3.22,1.84)(0.53,0.24)

\rput[tl](6.58,-4){\qqttzz{$ \mathcal{F}^1 $}}
\rput[tl](-3,-0.9){$ \mathcal{F}^2 $}
\rput[lt](-5.38,2.33){\parbox{2.98 cm}{Left extremal \\ atom}}

\begin{scriptsize}
\psdots[dotsize=4pt 0,dotstyle=*,linecolor=ttwwqq](0.42,-1.77)
\rput[bl](0.8,-1.64){\ttwwqq{$a_1$}}
\psdots[dotsize=4pt 0,dotstyle=*,linecolor=ttwwqq](2.21,3.44)
\rput[bl](2.31,3.58){\ttwwqq{$a_2$}}
\psdots[dotsize=4pt 0,dotstyle=*,linecolor=ttwwqq](4.37,2.49)
\rput[bl](4.47,2.63){\ttwwqq{$a_3$}}
\psdots[dotsize=4pt 0,dotstyle=*,linecolor=ttwwqq](8.25,-2.5)
\rput[bl](8.6,-2.35){\ttwwqq{$a_4$}}
\psdots[dotsize=4pt 0,dotstyle=*,linecolor=ttwwqq](9.46,1.44)
\rput[bl](9.57,1.6){\ttwwqq{$a_5$}}
\psdots[dotsize=4pt 0,dotstyle=*,linecolor=ttwwqq](12.42,2.22)
\rput[bl](12.51,2.36){\ttwwqq{$a_{n_x}$}}

\psdots[dotsize=4pt 0,dotstyle=*,linecolor=ttwwqq](0.53,0.24)
\psdots[dotsize=4pt 0,dotstyle=*,linecolor=ttwwqq](2.47,0.4)
\psdots[dotsize=4pt 0,dotstyle=*,linecolor=ttwwqq](4.48,0.5)
\psdots[dotsize=4pt 0,dotstyle=*,linecolor=ttwwqq](8.54,0.5)
\psdots[dotsize=4pt 0,dotstyle=*,linecolor=ttwwqq](9.49,0.46)
\psdots[dotsize=4pt 0,dotstyle=*,linecolor=ttwwqq](12.56,0.23)
\end{scriptsize}

\end{pspicture*}

\vspace{-2cm}
\caption{Global Atom in a center leaf.}\label{fig.GlobalAtom}
\end{figure}


\section{Proof of Theorem \ref{th.Equilibrium}}\label{sec.ProofEq}

\textbf{Case 1:} $\mu(C)=0$\\

This case follows as in the proof of \cite[Theorem~A]{CrisostomoTahzibi}. We present it here for completeness.\\

Let $\nu$ be the unique equilibrium state of $(A,\phi)$. Assume by contradiction that there exist $\eta$ another equilibrium state for $(f,\varphi=\phi\circ H)$. By the uniqueness of $\nu$ we have that $H_{\ast}\mu=H_{\ast} \eta$. Let $\psi:\T^4\to \R$ be any continuous map. Since $H^{-1}H(C)=C$ we have  $\eta(C)=0$. Therefore,
\begin{align*}
\int \psi d\mu &= \int_{\T^4\backslash C}\psi d\mu, \\
&=\int_{\T^4\backslash C}\psi\circ H^{-1} dH_{\ast}\mu,\\
&=\int_{\T^4\backslash C}\psi\circ H^{-1} dH_{\ast}\eta,\\
&=\int_{\T^4\backslash C}\psi d\eta,\\
&=\int \psi d\eta.
\end{align*}
Since $\psi$ is arbitrary, this implies that $\mu=\eta$.\\

\textbf{Case 2:} $\mu(C)=1$\\

Consider the partition:
\[
 \P:=\{\P(x):=H^{-1}H(x)|x\in C\},
\] 
and denote by $\mu_x$ the conditional measure of $\mu$ supported on $\P(x)$. We proceed as in the proof of Theorem \ref{main:ergodic}. Hence, we have that 
\[
 \mu_x=\sum_{j=1}^{n_x}p_j(x)\delta_{a_j(x)},
\]
for some $a_j(x)\in R_j(x)$. Moreover, if conditions \ref{it.2} and \ref{it.3} are satisfied, then $\mu$ is virtually hyperbolic. The only thing left to prove is the existence of another equilibrium state.\\

\begin{lemma}\label{lem.MeasurableExtremal}
 If $H$ sends center leaves of $f$ to center leaves of $A$ and \ref{it.Eq3} is satisfied then, the set of extremal points of intervals $Q(x)=\P(x)\cap\F^1(x)$ forms a measurable set.
\end{lemma}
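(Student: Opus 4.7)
The plan is to express the set of extremal points as a Borel set built from the continuous semiconjugacy $H$ and a local arc-length ``flow'' along $\F^1$, using only countable set-theoretic operations. The key observation is that $z$ is an extremal point of the interval $Q(z)$ exactly when one cannot move from $z$ along $\F^1$ in at least one of the two directions while remaining in the fibre $H^{-1}(H(z))$.

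First I would set up a local parametrization of $\F^1$. Since $\F^1$ is a continuous one-dimensional foliation with $C^1$ leaves, and since $\tH^{-1}(\tx)$ has uniformly bounded diameter (by $2E$), I fix a finite cover of $\T^4$ by foliated boxes $\B_k$ together with continuous maps $\phi_k:\B_k\times[-\tau,\tau]\to\T^4$ such that $\phi_k(z,s)$ is the point at signed arc length $s$ from $z$ along $\F^1(z)$, choosing $\tau$ strictly larger than the diameter of every $Q(x)$. Joint continuity of $\phi_k$ is a standard consequence of the $C^1$ regularity of the leaves combined with the continuous transversal dependence of the foliation.

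Inside each box I define the right reach
$\rho(z):=\sup\{\,s\in[0,\tau]:H(\phi_k(z,t))=H(z)\text{ for every }t\in[0,s]\,\}$
and symmetrically the left reach $\lambda(z)$. For each $a>0$, continuity of $H$ and $\phi_k$ gives
\[
 \{z\in\B_k:\rho(z)>a\}=\bigcap_{t\in[0,a]\cap\mathbb{Q}}\{z\in\B_k:H(\phi_k(z,t))=H(z)\},
\]
which is a countable intersection of preimages of the diagonal $\Delta\subset\T^4\times\T^4$ under continuous maps, hence closed. Thus $\rho$ and $\lambda$ are Borel measurable on $\B_k$. A point $z\in C\cap\B_k$ is an extremal point of $Q(z)$ precisely when $\rho(z)=0$ or $\lambda(z)=0$; the extremal set restricted to $\B_k$ is therefore Borel, and the finite union over $k$ yields the global measurability claim.

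The role of hypothesis \ref{it.Eq3} is to guarantee that $H$ carries $\F^1$ onto an honest invariant foliation of $A$, so that the one-dimensional transversal structure underlying the reach functions is globally consistent with the rectangle decomposition of Section \ref{sec.Rect} and the extremal points really correspond to the ``corner'' atoms $a_j(x)$ of the rectangles $R_j(x)$ that will be used in the subsequent construction of a second equilibrium state; without it, $H$ could collapse $\F^1$-arcs to points and the notion of ``extremal in $\F^1$'' could fail to be dynamically meaningful. The main technical obstacle will be verifying the joint continuity of the arc-length parametrization $\phi_k$; once this standard foliation-theoretic fact is in place, measurability follows immediately from the countable-intersection argument above.
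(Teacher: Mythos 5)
Your proof is correct in substance but takes a genuinely different route from the paper's. You exhibit the extremal set directly as a Borel set: the reach functions $\rho,\lambda$ are upper semicontinuous because $\{\rho\geq a\}$ is a countable intersection of closed sets (note your displayed identity should be stated with $\rho(z)\geq a$ rather than $\rho(z)>a$; as written it fails at points with $\rho(z)=a$, but upper semicontinuity, hence Borel measurability, still follows), and extremality of $z$ in $Q(z)$ is the purely local condition $\rho(z)=0$ or $\lambda(z)=0$ --- in particular your requirement that $\tau$ dominate the size of every $Q(x)$ is unnecessary (and a uniform bound on the \emph{intrinsic} leaf length of the fibres is not obviously available), since only arbitrarily small displacements matter. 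The paper instead works on the $A$-side: under \ref{it.Eq3} the image $H(\F^1)$ is a linear, straight-line invariant foliation of $A$, so one can flow $H(C)$ backwards a distance $1/n$ along it, pull back by $H$, and apply Aumann's Measurable Choice Theorem (Theorem \ref{th.Measurable}) on the quotient $\hat{C}=C/\Q$ to obtain measurable selections that increase to the lower extreme of each $Q(x)$; measurability of the resulting set then comes from Lusin's theorem. What the paper's argument buys, and yours does not directly, is a measurable selection $Q(x)\mapsto b(\hat{x})$ defined on the quotient, which is exactly what is used afterwards to define the second equilibrium state $\eta=\int\delta_{b(\hat{x})}d\hat{\mu}$ and to check its invariance and ergodicity; from your Borel set one would still need a uniformization step (each class meets the set of lower extremes in exactly one point, so a selection theorem or a projection/universal-measurability argument recovers the map). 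Conversely, your argument is more elementary --- no measurable choice, no auxiliary flow --- and it never actually uses hypothesis \ref{it.Eq3}; your explanation of that hypothesis's role is not how the paper uses it (there it provides precisely the unit-speed flow along the straight leaves of $A$). The joint continuity of the leafwise arc-length parametrization $\phi_k$, which you defer, is indeed standard for an oriented one-dimensional foliation with $C^1$ leaves tangent to the continuous bundle $E^1$, so that is not a gap.
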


\begin{proof}
Let us denote $\F^1_A$ the foliation of the center direction of $A$ induced by the image of $\F^1$ by the semiconjugacy $H$. We will prove the measurability of the lower extremal points of $Q(x)$. The case of higher extremal points is similar.\\

Consider $\varphi:\T^4\to\T^4$ the flow on $\T^4$ having constant speed one in $\F^1_A$. More precisely, we know that the leaves of $\F^1$ in the center foliation of A are straight lines and orientable by assumption. Define $\varphi(t,x)$ the unique point in the $\F^1_A(x)$ which has distance $t$ inside this $\F^1_A$-leaf and in the positive direction from $x$.\\

Following the proof of \cite[Lemma~ 3.2]{PonceTahzibiVaraoBernoulli}, we have that $H(C)$ is a measurable set. Therefore, $\varphi(-1/n,H(C))$ is a measurable set. Furthermore, since $H$ is continuous, the set $H^{-1}\left(\varphi(-1/n,H(C))\right)$ is also measurable.\\

Consider $\hat{C}=C/\Q$ where $\Q:=\{Q(x):=\P(x)\cap\F^1(x):x\in C\}$. Let 
\[
 \phi_n:\hat{C}\to H^{-1}\left(\varphi(-1/n,H(C))\right),
\]
be the function given by the Measurable Choice Theorem \ref{th.Measurable} applied to the product $\hat{C}\times \T^4$ and the measurable set $G=H^{-1}\left(\varphi(-1/n,H(C))\right)$.\\

Notice that fixing $Q(x)\in\hat{C}$ we have that $\phi_n(Q(x))$ is an increasing sequence. Therefore, we can define the function
\begin{align*}
\phi:&\hat{C}\to \T^4\\
     &Q(x)\mapsto\lim_{n\to\infty}\phi_n(Q(x)),
\end{align*}
and by its construction $\phi(Q(x))$ is the lower extreme of $Q(x)$. Notice that $\phi$ is a measurable function because it is the limit of measurable functions. Let $\pi$ the canonical projection and let $\hat{\mu}=\pi_{\ast}\mu$ the measure in the quotient space. By Lusin's theorem for any $n\in\N$ there exist a compact set $\hat{K}_n\subset\hat{C}$ such that $\hat{\mu}(\hat{K}_n^c)<1/n$ and $\phi$ is a continuous function when restricted to $\hat{K}_n$. Therefore, $\phi(\hat{K}_n)$ is a compact set. Without loss of generality we may consider $\hat{C}=\cup_{n\in\N}\hat{K}_n$. Therefore,
\[
 \phi(\hat{C})=\bigcup_{n\in\N}\phi(K_n),
\]
is a measurable set. Thus, we have proven so far that the base of the intervals from $\Q$ forms a measurable set.
\end{proof}

We have seen that the center foliation is measure theoretically equivalent to the partition of $\T^4$ into points, hence measurable. Let us denote $(\hat{M},\hat{\mu})$ the quotient space $\hat{M}:=\T^4/\F^c$ equipped with the quotient measure. We denote by $\hat{f}:\hat{M}\to\hat{M}$ the induced map on the quotient space. Therefore, since $\mu$ if $f$-invariant, then $\hat{\mu}$ is $\hat{f}$-invariant.\\

Notice that, by the virtual hyperbolicity proved above, every element $\hat{x}\in\hat{M}$ can be identified by the unique $\Q_x(z)\subset\F^c(x)$ where its atom belongs to. When $\Q_x(z)$ is a collapse interval inside a $\F^1$-leaf, we define $\Q(\hat{x}):=\Q_x(z)$. On the other hand, if $\Q_x(z)$ is a point, this means that the rectangle $R_j$ containing the atom is one dimensional and contained in an $\F^2$-leaf. In this case, we define $\Q(\hat{x}):=R_j$ (see Figure \ref{fig.RecDim1F2}).\\

Thus, we can write
\[
 \mu=\int\delta_{a(\hat{x})}d\hat{\mu},
\]
where $a(\hat{x})$ is the atom inside the collapse interval $\Q(\hat{x})$. Choose $b(\hat{x})\neq a(\hat{x})$ the left (or right) extreme point of $\Q(\hat{x})$. Let us define
\[
 \eta=\int\delta_{b(\hat{x})}d\hat{\mu},
\]
which is well-defined because $\{b(\hat{x}):\hat{x}\in\hat{M}\}$ is measurable by Lemma \ref{lem.MeasurableExtremal}. We claim that this is an $f$-invariant ergodic measure satisfying $H_{\ast}\eta=H_{\ast}\mu$. In order to see this, consider any continuous map $\psi$ and notice that
\begin{align*}
 \int\psi\circ fd\eta &=\int\int\psi\circ f d\delta_{b(\hat{x})}d\hat{\mu}\\
                     &=\int\psi(f(b(\hat{x})))d\hat{\mu}\\
                     &=\int\psi(b(\hat{f}(\hat{x})))d\hat{\mu}\\
                     &=\int\psi(b(\hat{x}))d\hat{\mu}\\
                     &=\int\psi d\eta,\\
\end{align*}
where the third equality comes from the invariance of collapse intervals and that $f$ preserves the orientation of the $\F^i$-foliations with $i=1,2$. The fourth equality is due to the $\hat{f}$-invariance of $\hat{\mu}$.\\

To see the ergodicity of $\eta$, consider any invariant subset $D$ with positive $\eta$-measure. Since $\hat{\mu}$ is ergodic and $f(b(\hat{x}))=b(\hat{f}(\hat{x}))$, we have that the set $\{\hat{x}:\mathbbm{1}_D(b(\hat{x}))=1\}$ is $\hat{f}$-invariant. So the ergodicity of $\hat{\mu}$ guarantees it has full measure, which implies $\eta(D)=1$.\\

Notice that, if $\varphi=\phi\circ H$, since $H(a(\hat{x}))=H(b(\hat{x}))$ then
\[
 \int\varphi d\eta=\int\varphi(b(\hat{x})) d\hat{\mu}=\int\varphi(a(\hat{x})) d\hat{\mu}=\int\varphi d\mu.
\]
However, by the essential uniqueness of disintegration we have that $\eta\neq\mu$.\\

We are left with the task of determining $h_{\eta}(f)=h_{\mu}(f)$. But this is a direct consequence of the fact that $(f,\mu)$ and $(f,\eta)$ are measure theoretically isomorphic by the map that sends $a(\hat{x})$ to $b(\hat{x})$. Thus, $\eta$ is also an equilibrium state form $(f,\varphi)$.\\

\section*{Acknowledgments}
The authors thank Cristina Lizana by the communication of her work and for clarifying some questions. C.F.A. was partially funded by CAPES-Brazil (grant \#2019/88882.329056-01). A.S. thanks the Math Department of ICMC (São Carlos) where most of the work was developed. This work was partially supported by FAPESP (Fundação de Amparo à Pesquisa do Estado de São Paulo), grant \#2018/18990-0 and Universidad de Costa Rica for A.S. and R.V. was partially supported by National Council for Scientific and Technological Development– CNPq, Brazil and partially supported by FAPESP (grants \#17/06463-3 and \#16/22475-9).
\bibliography{bibliography}
\bibliographystyle{plain}

\end{document}